\def\a{\alpha}
\def\b{\beta}
\def\d{\delta}
\def\e{\epsilon}
\def\f{\frac}                          
\def\G{\Gamma}
\def\k{\kappa}
\def\lb\{{\left\{}                     
\def\la{\lambda}
\def\La{\Lambda}
\def\lla{\longleftarrow}               
\def\lm{\limits}                       
\def\lra{\longrightarrow}              
\def\dllra{\Longleftrightarrow}        
\def\llra{\longleftrightarrow}         
\def\n{\nabla}
\def\ngth{\negthickspace}              
\def\ngtn{\negthinspace}              
\def\ola{\overleftarrow}               
\def\Om{\Omega}
\def\om{\omega}
\def\op{\oplus}
\def\oper{\operatorname}             
\def\oplm{\operatornamewithlimits}   
\def\ora{\overrightarrow}            
\def\ov{\overline}                   
\def\ova{\overarrow}                 
\def\ox{\otimes}                     
\def\p{\partial}                     
\def\rb\}{\right\}}                  
\def\s{\sigma}
\def\sbq{\subseteq}                  
\def\spq{\supseteq}                  
\def\sqp{\sqsupset}                  
\def\supth{{\text{th}}}              
\def\T{\Theta}
\def\th{\theta}
\def\tkap{\thickapprox}              
\def\tl{\tilde}
\def\tril{\triangleleft}             
\def\thra{\twoheadrightarrow}        
\def\un{\underline}                  
\def\ups{\upsilon}
\def\vp{\varphi}
\def\vt{\vartheta}
\def\wh{\widehat}                    
\def\wt{\widetilde}                  
\def\x{\times}                       
\def\z{\zeta}
\def\({\left(}
\def\){\right)}
\def\[{\left[}
\def\]{\right]}
\def\<{\left<}
\def\>{\right>}
\def\ra{\rightarrow}
\newcommand{\thom}{\widetilde{\hom}}
\def\tec{Teichm\"uller\ }
\def\sconr{\hbox{\medspace\vrule width 0.4pt height 4.7pt depth
0.4pt \vrule width 5pt height 0pt depth 0.4pt\medspace}}
\def\SA{\mathcal A}
\def\SB{\mathcal B}
\def\SC{\mathcal C}
\def\SD{\mathcal D}
\def\SE{\mathcal E}
\def\SF{\mathcal F}
\def\SG{\mathcal G}
\def\SH{\mathcal H}
\def\SI{\mathcal I}
\def\SJ{\mathcal J}
\def\SK{\mathcal K}
\def\SL{\mathcal L}
\def\SM{\mathcal M}
\def\SN{\mathcal N}
\def\SO{\mathcal O}
\def\SP{\mathcal P}
\def\SQ{\mathcal Q}
\def\SR{\mathcal R}
\def\SS{\mathcal S}
\def\ST{\mathcal T}
\def\SU{\mathcal U}
\def\SV{\mathcal V}
\def\SW{\mathcal W}
\def\SX{\mathcal X}
\def\SY{\mathcal Y}
\def\SZ{\mathcal Z}
\newcommand{\BA}{\ensuremath{\mathbf A}}
\newcommand{\BB}{\ensuremath{\mathbf B}}
\newcommand{\BC}{\ensuremath{\mathbf C}}
\newcommand{\BD}{\ensuremath{\mathbf D}}
\newcommand{\BE}{\ensuremath{\mathbf E}}
\newcommand{\BF}{\ensuremath{\mathbf F}}
\newcommand{\BG}{\ensuremath{\mathbf G}}
\newcommand{\BH}{\ensuremath{\mathbf H}}
\newcommand{\BI}{\ensuremath{\mathbf I}}
\newcommand{\BJ}{\ensuremath{\mathbf J}}
\newcommand{\BK}{\ensuremath{\mathbf K}}
\newcommand{\BL}{\ensuremath{\mathbf L}}
\newcommand{\BM}{\ensuremath{\mathbf M}}
\newcommand{\BN}{\ensuremath{\mathbf N}}
\newcommand{\BO}{\ensuremath{\mathbf O}}
\newcommand{\BP}{\ensuremath{\mathbf P}}
\newcommand{\BQ}{\ensuremath{\mathbf Q}}
\newcommand{\BR}{\ensuremath{\mathbf R}}
\newcommand{\BS}{\ensuremath{\mathbf S}}
\newcommand{\BT}{\ensuremath{\mathbf T}}
\newcommand{\BU}{\ensuremath{\mathbf U}}
\newcommand{\BV}{\ensuremath{\mathbf V}}
\newcommand{\BW}{\ensuremath{\mathbf W}}
\newcommand{\BX}{\ensuremath{\mathbf X}}
\newcommand{\BY}{\ensuremath{\mathbf Y}}
\newcommand{\BZ}{\ensuremath{\mathbf Z}}
\def\bba{{\mathbb A}}
\def\bbb{{\mathbb B}}
\def\bbc{{\mathbb C}}
\def\bbd{{\mathbb D}}
\def\bbe{{\mathbb E}}
\def\bbf{{\mathbb F}}
\def\bbg{{\mathbb G}}
\def\bbh{{\mathbb H}}
\def\bbi{{\mathbb I}}
\def\bbj{{\mathbb J}}
\def\bbk{{\mathbb K}}
\def\bbl{{\mathbb L}}
\def\bbm{{\mathbb M}}
\def\bbn{{\mathbb N}}
\def\bbo{{\mathbb O}}
\def\bbp{{\mathbb P}}
\def\bbq{{\mathbb Q}}
\def\bbr{{\mathbb R}}
\def\bbs{{\mathbb S}}
\def\bbt{{\mathbb T}}
\def\bbu{{\mathbb U}}
\def\bbv{{\mathbb V}}
\def\bbw{{\mathbb W}}
\def\bbx{{\mathbb X}}
\def\bby{{\mathbb Y}}
\def\bbz{{\mathbb Z}}
\def\Fa{\mathfrak a}
\def\Fb{\mathfrak b}
\def\Fc{\mathfrak c}
\def\Fd{\mathfrak d}
\def\Fe{\mathfrak e}
\def\Ff{\mathfrak f}
\def\Fg{\mathfrak g}
\def\Fh{\mathfrak h}
\def\Fi{\mathfrak i}
\def\Fj{\mathfrak j}
\def\Fk{\mathfrak k}
\def\Fl{\mathfrak l}
\def\Fm{\mathfrak m}
\def\Fn{\mathfrak n}
\def\Fo{\mathfrak o}
\def\Fp{\mathfrak p}
\def\Fq{\mathfrak q}
\def\Fr{\mathfrak r}
\def\Fs{\mathfrak s}
\def\Ft{\mathfrak t}
\def\Fu{\mathfrak u}
\def\Fv{\mathfrak v}
\def\Fw{\mathfrak w}
\def\Fx{\mathfrak x}
\def\Fy{\mathfrak y}
\def\Fz{\mathfrak z}
\def\fst{\frak{st}}
\def\FA{\mathfrak A}
\def\FB{\mathfrak B}
\def\FC{\mathfrak C}
\def\FD{\mathfrak D}
\def\FE{\mathfrak E}
\def\FF{\mathfrak F}
\def\FG{\mathfrak G}
\def\FH{\mathfrak H}
\def\FI{\mathfrak I}
\def\FJ{\mathfrak J}
\def\FK{\mathfrak K}
\def\FL{\mathfrak L}
\def\FM{\mathfrak M}
\def\FN{\mathfrak N}
\def\FO{\mathfrak O}
\def\FP{\mathfrak P}
\def\FQ{\mathfrak Q}
\def\FR{\mathfrak R}
\def\FS{\mathfrak S}
\def\FT{\mathfrak T}
\def\FU{\mathfrak U}
\def\FV{\mathfrak V}
\def\FW{\mathfrak W}
\def\FX{\mathfrak X}
\def\FY{\mathfrak Y}
\def\FZ{\mathfrak Z}
\def\scra{\mathscr{A}}
\def\scrb{\mathscr{B}}
\def\scrc{\mathscr{C}}
\def\scrd{\mathscr{D}}
\def\scre{\mathscr{E}}
\def\scrf{\mathscr{F}}
\def\scrg{\mathscr{G}}
\def\scrh{\mathscr{H}}
\def\scri{\mathscr{I}}
\def\scrj{\mathscr{J}}
\def\scrk{\mathscr{K}}
\def\scrl{\mathscr{L}}
\def\scrm{\mathscr{M}}
\def\scrn{\mathscr{N}}
\def\scro{\mathscr{O}}
\def\scrp{\mathscr{P}}
\def\scrq{\mathscr{Q}}
\def\scrs{\mathscr{S}}
\def\scrt{\mathscr{T}}
\def\scru{\mathscr{U}}
\def\scrv{\mathscr{V}}
\def\scrw{\mathscr{W}}
\def\scrx{\mathscr{X}}
\def\scry{\mathscr{Y}}
\def\scrz{\mathscr{Z}}
\newcommand{\rz}{\raisebox{.2ex}{*}}
\newcommand{\rzc}{\raisebox{.1ex}{\circ}}
\newcommand{\rzf}{\raisebox{.1ex}{f}}
\def\rns{\rz\bbr^n_{nes}}            
\def\rms{\rz\bbr^m_{nes}}            
\def\rps{\rz\bbr^m_{nes}}            
\def\rs{\rz\bbr_{nes}}            
\newcommand{\bsm}[1]{\boldsymbol{#1}}    
\newcommand{\crr}{$C^\infty(\bbr,\bbr)$}
\newcommand{\dcrr}{C^\infty(\bbr,\bbr)}
\newcommand{\cMn}{$C^\infty(M,\bbr^n)$}
\newcommand{\dcMn}{C^\infty(M,\bbr^n)}
\newcommand{\cM}{$C^\infty(M,\bbr )$}
\newcommand{\dcM}{C^\infty(M,\bbr )}
\newcommand{\cmn}{$C^\infty(\bbr^m,\bbr^n)$}
\newcommand{\cnn}{$C^\infty(\bbr^n,\bbr^n)$}
\newcommand{\dcmn}{C^\infty(\bbr^m,\bbr^n)}
\newcommand{\dcnn}{C^\infty(\bbr^n,\bbr^n)}
\newcommand{\cm}{$C^\infty(\bbr^m,\bbr)$}
\newcommand{\dcm}{C^\infty(\bbr^m,\bbr)}
\newcommand{\cn}{$C^\infty(\bbr^n,\bbr)$}
\newcommand{\dcn}{C^\infty(\bbr^n,\bbr)}
\newcommand{\cnpr}{$C^\infty_{pr}(\bbr^n,\bbr)$}
\newcommand{\cnnpr}{$C^\infty_{pr}(\bbr^n,\bbr^n)$}
\newcommand{\dcnpr}{C^\infty_{pr}(\bbr^n,\bbr)}
\newcommand{\dcnnpr}{C^\infty_{pr}(\bbr^n,\bbr^n)}
\newcommand{\strcrr}{$\rz C^\infty(\bbr,\bbr)$}
\newcommand{\dstrcrr}{\rz C^\infty(\bbr,\bbr)}
\newcommand{\strcmn}{$\rz C^\infty(\bbr^m,\bbr^n)$}
\newcommand{\dstrcmn}{\rz C^\infty(\bbr^m,\bbr^n)}
\newcommand{\strcm}{$\rz C^\infty(\bbr^m,\bbr)$}
\newcommand{\dstrcm}{\rz C^\infty(\bbr^m,\bbr)}
\newcommand{\strcn}{$\rz C^\infty(\bbr^n,\bbr)$}
\newcommand{\strcnn}{$\rz C^\infty(\bbr^n,\bbr^n)$}
\newcommand{\dstrcn}{\rz C^\infty(\bbr^n,\bbr)}
\newcommand{\dstrcnn}{\rz C^\infty(\bbr^n,\bbr^n)}
\newcommand{\strcnpr}{$\rz C^\infty_{pr}(\bbr^n,\bbr)$}
\newcommand{\dstrcnpr}{\rz C^\infty_{pr}(\bbr^n,\bbr)}
\newcommand{\strcMn}{$\rz C^\infty(M,\bbr^n)$}
\newcommand{\dstrcMn}{\rz C^\infty(M,\bbr^n)}
\newcommand{\strcM}{$\rz C^\infty(M,\bbr)$}
\newcommand{\dstrcM}{\rz C^\infty(M,\bbr)}
\newcommand{\scmn}{$SC^\infty(\bbr^m,\bbr^n)$}
\newcommand{\scnn}{$SC^\infty(\bbr^n,\bbr^n)$}
\newcommand{\scn}{$SC^\infty(\bbr^n,\bbr)$}
\newcommand{\scrr}{$SC^\infty(\bbr,\bbr)$}
\newcommand{\scnnpr}{$SC^\infty_{pr}(\bbr^n,\bbr^n)$}
\newcommand{\dscmn}{SC^\infty(\bbr^m,\bbr^n)}
\newcommand{\dscnn}{SC^\infty(\bbr^n,\bbr^n)}
\newcommand{\dscnnpr}{SC^\infty_{pr}(\bbr^n,\bbr^n)}
\newcommand{\scm}{$SC^\infty(\bbr^m,\bbr)$}
\newcommand{\dscm}{SC^\infty(\bbr^m,\bbr)}
\newcommand{\dscn}{SC^\infty(\bbr^n,\bbr)}
\newcommand{\dscrr}{SC^\infty(\bbr,\bbr)}
\newcommand{\scMn}{$SC^\infty(M,\bbr^n)$}
\newcommand{\dscMn}{SC^\infty(M,\bbr^n)}
\newcommand{\scM}{$SC^\infty(M,\bbr)$}
\newcommand{\dscM}{SC^\infty(M,\bbr)}
\newcommand{\cgcmn}{$^\s C^\infty(\bbr^m,\bbr^n)$}
\newcommand{\dcgcmn}{^\s C^\infty(\bbr^m,\bbr^n)}
\newcommand{\cgcMn}{$^\s C^\infty(M,\bbr^n)$}
\newcommand{\dcgcMn}{^\s C^\infty(M,\bbr^n)}
\newcommand{\cgcn}{${}^\s C^\infty(\bbr^n,\bbr)$}
\newcommand{\dcgcn}{{}^\s C^\infty(\bbr^n,\bbr)}
\newcommand{\cgcrr}{${}^\s C^\infty(\bbr,\bbr)$}
\newcommand{\dcgcrr}{{}^\s C^\infty(\bbr,\bbr)}
\newcommand{\tangm}{$C^\infty(T\bbr^m)$}
\newcommand{\tangM}{$C^\infty(TM)$}
\newcommand{\stangM}{$SC^\infty(TM)$}
\newcommand{\tangMnes}{$C^\infty(TM)_{\nes}$}
\newcommand{\strtangm}{$\rz C^\infty(T\bbr^m)$}
\newcommand{\strtangM}{$\rz C^\infty(TM)$}
\newcommand{\tangn}{$C^\infty(T\bbr^n)$}
\newcommand{\strtangn}{$\rz C^\infty(T\bbr^n)$}
\newcommand{\tanf}[1]{$C^\infty(#1^{-1}T\bbr^n)$}
\newcommand{\tanfnes}[1]{$C^\infty(#1^{-1}T\bbr^n)_{nes}$}
\newcommand{\strtanf}[1]{$\rz C^\infty(#1^{-1}T\bbr^n)$}
\newcommand{\dtangm}{C^\infty(T\bbr^m)}
\newcommand{\dtangM}{C^\infty(TM)}
\newcommand{\dstangM}{SC^\infty(TM)}
\newcommand{\dtangMnes}{C^\infty(TM)_{\nes}}
\newcommand{\dstrtangm}{\rz C^\infty(T\bbr^m)}
\newcommand{\dstrtangM}{\rz C^\infty(TM)}
\newcommand{\dtangn}{C^\infty(T\bbr^n)}
\newcommand{\dstrtangn}{\rz C^\infty(T\bbr^n)}
\newcommand{\dtanf}[1]{C^\infty(#1^{-1}T\bbr^n)}
\newcommand{\dtanfnes}[1]{C^\infty(#1^{-1}T\bbr^n)_{nes}}
\newcommand{\dstrtanf}[1]{\rz C^\infty(#1^{-1}T\bbr^n)}
\newcommand{\cts}[1]{\*C^\infty(T\bbr^#1)}
\newcommand{\RDM}{\SD_M^{\bbr}}
\newcommand{\lRDn}{\SD_n^{\bbr}}
\newcommand{\DM}{\SD_{M}}
\newcommand{\DMs}{\SD_{M,nes}}
\newcommand{\Dn}{\SD_{n}}
\newcommand{\Dns}{\SD_{n,nes}}
\newcommand{\flXt}{$\BF l^X_t$}           
\newcommand{\flXs}{$\BF l^X_s$}
\newcommand{\flYt}{$\BF l^Y_t$}
\newcommand{\flYs}{$\BF l^Y_s$}
\newcommand{\flXd}{$\BF l^X_{\d}$}
\newcommand{\flYd}{$\BF l^Y_{\d}$}
\newcommand{\dflXt}{\BF l^X_t}           
\newcommand{\dflXs}{\BF l^X_s}
\newcommand{\dflYt}{\BF l^Y_t}
\newcommand{\dflYs}{\BF l^Y_s}
\newcommand{\dflXd}{\BF l^X_{\d}}
\newcommand{\dflYd}{\BF l^Y_{\d}}
\newcommand{\norm}[1]{\left\|#1\right\|}
\newcommand{\normk}[1]{\left\|#1\right\|_k}
\newcommand{\normkk}[1]{\left\|#1\right\|_{k+1}}
\newcommand{\normkpl}[1]{\left\|#1\right\|_{k+1}}
\newcommand{\normo}[1]{\left\|#1\right\|_1}
\newcommand{\normz}[1]{\left\|#1\right\|_0}
\newcommand{\normj}[1]{\left\|#1\right\|_j}
\newcommand{\nsnormj}[1]{\rz \left\|#1\right\|_j}
\newcommand{\quo}[1]{\frac{\norm{#1}}{\norm{#1}+1}}
\newcommand{\quok}[1]{\frac{\normk#1}{\normk#1+1}}
\newcommand{\quoj}[1]{\frac{\normj#1}{\normj#1+1}}
\newcommand{\quoke}[1]{\frac{\normk#1}{\normkpl#1}}
\newcommand{\quoje}[1]{\frac{\normj#1}{\normj#1}}
\newcommand{\nsquoj}[1]{\frac{\rz \ngtn\normj#1}{*\ngtn\normj#1+1}}
\newcommand{\smk}[1]{\sum_{k=0}^{\infty}\frac{1}{2^k}#1}
\newcommand{\smj}[1]{\sum_{j=0}^{\infty}\frac{1}{2^j}#1}
\newcommand{\nssmj}[1]{\rz \ngtn\sum_{j=0}^{\infty}\frac{1}{2^j}#1}
\newcommand{\pow}[2]{\ensuremath{$#1^#2$}}
\newcommand{\st}{\scalebox{.8}[1.3]{$st$}}
\newcommand{\dsim}{\stackrel{\d}{\sim}}
\newcommand{\frwnd}{\mathbin{\text{\scalebox{1}[.7]{$\stackrel{\smallfrown}{\d}$}}}}
\def\k{\kappa}
\newcommand{\btrn}[1]{\bigtriangledown_#1}
\newcommand{\btrndg}{\bigtriangledown_{\d}(g)}
\newcommand{\trndg}{\raisebox{.1ex}{\bigtriangledown_{\d}(g)}}
\newcommand{\fpd}{\rz f\circ \varphi_{\d}}
\newcommand{\frad}{\frac{1}{\d}}
\newcommand{\pdf}{\psi_{\d} \circ \rz f}
\keywords{inverse mapping theorems, nonstandard analysis, magnification space}
\subjclass[2000]{Primary 26B10; Secondary 26E35,57R45}
\newtheorem{theorem}{Theorem}[section]
\newtheorem{lemma}{Lemma}[section]
\newtheorem{definition}{Definition}[section]
\newtheorem{corollary}{Corollary}[section]
\theoremstyle{definition}
\newtheorem*{remark}{Remark}
\title{Magnification Spaces: A nonstandard approach  to inverse mapping theorems
  }
\author{Tom McGaffey}
\date{}
\begin{document}
\maketitle
\tableofcontents
\begin{abstract} This paper develops an infinitesimal order of magnitude  coupled with overflow technique that allows nonnumerical proofs of nondegenerate and degenerate inverse mapping theorems for mappings minimally regular at a point.
                 This approach is used first to give a transparent proof of the inverse mapping theorem of Behrens and Nijenhuis and then is deployed to prove an inverse mapping result for mappings whose linear part vanishes at the differentiable point. We finish by indicating further possible capacities of this approach.
\end{abstract}

\section{Summary:  Setting and strategy}
\subsection{Summary and strategy}
    In this paper we will develop a nonstandard framework for differential calculus that comes close to allowing one to ``throw away higher order terms'' (see Stroyan's paper \cite{Stroyan1977} for a nonstandard take on this goal) in considerations of how `regularity' of the top term of the Taylor expansion of differentiable map implies regularity of the function. To do this we will first  develop  rudiments of a theory of magnification spaces and their morphisms sufficient for the purposes of this paper. Although some of the constructions used here have appeared in a less functorial form, see Stroyan and Luxemburg, \cite{StrLux76}, an essentially functorial development of infinitesimal almost linear mappings and a natural embedding of differentiable objects will expose a natural scale invariant morphism structure. Further, this structure will be such that the grading determined by vanishing order will {\it in this infinitesimal setting} fall into sharp relief, quite unlike the behavior of mappings on the standard level.    In particular, a variety of inverse mapping theorems will become natural consequences of magnification space characterizations of differentiability, a transparent infinitesimal regularity and a natural use of overflow. More specifically, the Behrens-Nijenhuis inverse mapping theorem (for mappings uniformly differentiable at a point) will be an easy overflow consequence of a transparent proof of an infinitesimal version of this result. These tools, even in this elementary state of development, will analogously yield a singular inverse mapping theorem, where the differential vanishes at the differentiable point. 

    We will assume familiarity with at least the rudiments of nonstandard methods on Euclidean spaces. Higher saturations are not used so that eg., an  enlargement of just lower levels of a superstructure are assumed. The first chapter of Lindstr{\o}m, \cite{Lindstrom1988}, the introduction by Henson, \cite{Henson1997}, followed by Cutland's chapter, \cite{CutlandNSrealAnaly1997},  or  even the author's introduction to the subject in his dissertation, \cite{McGaffeyPhD} are quite sufficient.

\subsection{Differentiability and magnification mappings}
    The transfer of a mathematical object $A,f,r$ will generally be prefixed with a * superscript, $\rz A,\rz f,\rz r$, although we will often leave off the * if its setting is clear from context. General internal objects will often, but not always, be written in Latin, Greek or Fraktur.
    We give a mere introduction of the  magnification space framework that is part of our unfinished  work on a nonstandard approach to the stability of mappings as in Guillemin and Golubitsky's classic text, \cite{GolubitskyGuillemin1973}.
    We will be working strictly in finite dimensional real Euclidean spaces, generally $\bbr^m$ or $\bbr^n$ for $m,n\in\bbn$. $\bbr_+$ will denote the strictly positive real numbers, $\rz\bbr_+$ its transfer, etc. $\mu(0)$ will denote the set of infinitesimals in $\rz\bbr_+$ and $\mu(0)_+=\{\Fr\in\mu(0):\Fr>0\}$.
    For $x\in\bbr^m$, let $B_\d(x)=B^m_\d(x)=\{y\bbr^m:|y-x|<\d\}$, let $B_d=B_d(0)$, $\rz B_d(x)$ its transfer and also for $\Fd\in\rz\bbr_+$ and $\xi\in\rz\bbr^m$, $\rz B_\Fd(\xi)$ the analogous *open ball of radius $\Fd$ at $\xi$.

    If  $\d\in\mu(0)_+$, then $D^m_\d=D_\d(0)$ will denote the $\rz\bbr_{nes}$ module $\bsm{\rz\bbr^m_\d}=\d\rz\bbr^m_{nes}=\{v\in\rz\bbr^m:|v|/\d\in\rs\}$, and in general, if $\z\in\rz\bbr_{nes}^m$, $D^m_\d(x)$ will denote $\{x+\xi:\xi\in D^m_\d\}$. As the dimension of the space we are working in will be implicit, we will often write $D_\d$ for $D^m_\d$, etc., and also we will generally consider the situation where $\z$ is standard, ie., $\rz x$ for some $x\in\bbr^m$ and leave off the *.
    Let $\bsm{Q_x}=Q^\d_x=\{\f{\xi-x}{\d}:\xi\in D_\d(x)\}$ with the restricted norm. Let $\bsm{\flat_x}=\flat^{m,\d}_x:D_\d(x)\ra
    Q_x$ be the map $\xi\mapsto \f{\xi-x}{\d}$.  The map $\flat_x$ clearly is the $\d$ dilation map at $y$.
    In particular, we have that $Q_x$ (with the restricted norm) is just $\rz\bbr^m_{nes}$; eg.,
    we have a \textbf{well defined standard part map $\bsm{\fst_x}:Q_x\ra\bbr^m$} and
    hence \textbf{the $\bbr$ linear injection $\bsm{\s_x}:\bbr^m\ra Q_x:v\mapsto\rz v$ is well defined.}
    Denote the image by $\bsm{^\s Q_x}$ to indicate that these are the standard points in $Q_x$.  Note that $\fst_x\circ\s_x$ is the identity on $\bbr^m$. We will generally suppress the subscript on $\fst_x$, ie., just write $\fst$ noting that the space we are acting on will usually be clear.

    Next, note that \textbf{there is a canonical identification of the image of $\bsm{Q^\d_x}$ under the standard part map with $\bsm{T_x\bbr^m}$.} That is, we have the setup
     \begin{align}
       Q^\d_x\stackrel{\fst}{\ra}T_x\bbr^m\stackrel{\s}{\ra}Q^\d_x.
     \end{align}
    We will shortly see that this canonical sequence is even functorial for maps that are differentiable; ie., the respective maps preserve this triple.
    Note that the above actually gives a canonical identification of the set (standard) tangent vectors at $x$ with an $\bbr$ subspace of the $\rz\bbr_{nes}$ module $Q^\d_x$. In fact, these correspond to those vectors in $D_\d(x)$ of the form $x+\d v$ for $v\in\bbr^m$.  In particular, all vectors  $v\in Q^\d_x$ are infinitesimally close to a tangent vector, namely $\fst v$. In a later paper, we will see that the cloud of vectors in $Q^\d_x$ infinitesimally close to a given tangent vector in $T_x\bbr^m$ will function as the set of {\it almost derivations}.

    With respect to maps, note that although continuous maps $f:\bbr^m\ra\bbr^n$ preserve monads, they do not preserve the $D_\d$'s. It is elementary that for $f$ to be differentiable at $x$, it is necessary  that $f(D^m_\d(x))\subset D^n_\d(f(x))$ for $\d\in\mu(0)_+$. (Actually, the nonstandard condition is equivalent to this condition for a sufficiently full subset of infinitesimal $\d$'s.)
    There are a variety of infinitesimal conditions that are equivalent to differentiability of $f$ at $x$.
    Nonetheless, the above expression implies that any differentiable map $f:(\bbr^m,x)\ra(\bbr^n,f(x))$ induces, via the bijective dilation maps $\flat_x$ and $\flat_{f(x)}$, the map
     \begin{align}
       f^\d_x\;\dot=\;\flat_{f(x)}\circ\rz f\circ(\flat_x)^{-1}:(Q^\d_x,0)\ra (Q^\d_{f(x)},0).
     \end{align}
    In the following, one should see the extensive discussion in Stroyan and Luxemburg, \cite{StrLux76}, p94-110. In particular, one sees on p94 the difference (from a nonstandard perspective) between differentiability and uniform differentiability and on p95 a definition of almost linearity.

\begin{definition}\label{def:pointwise, almost linear map}
    Suppose that $\Ff:\rz\bbr^m_{nes}\ra\rz\bbr^n_{nes}$ is internal. This will mean that $\Ff$ is defined on an internal set containing $\rz\bbr^m_{nes}$ with range an internal set containing $\rz\bbr^n_{nes}$ with $\Ff(\rz\bbr^m_{nes})\subset\rz\bbr^n_{nes}$. Then we say
    that $\bsm{\Ff}$ is almost linear if for all
    $\a,\b\in\rz\bbr_{nes}$ and all $v,w\in\rz\bbr^m_{nes}$,
    $\Ff(\a v+\b w)\sim \a \Ff(v)+\b \Ff(w)$.
    We say that $\Ff$ is S-continuous on $\rz\bbr_{nes}^m$ if given $v, w\in\rz\bbr^m_{nes}$ with $v\sim w$, we have $\Ff(v)\sim\Ff(w)$. (This is a slight variation on the usual notion.)
\end{definition}
    It's clear that if $\Ff$ is almost linear, then an infinitesimal perturbation $\Fg$ of $\Ff$, ie., with $\Ff(\xi)-\Fg(\xi)\sim 0$ for all $\xi\in\rz\bbr^m_{nes}$, is also almost linear.
    Let's give some simple facts on differentiable functions and almost linear internal functions.
\begin{lemma}
    If mapping $f:\bbr^m\ra\bbr^n$ is differentiable at $x$, then $f^\d_x:Q_x^\d\ra Q^\d_{f(x)}$ is almost linear for all $\d\in\mu(0)_+$.
    An internal $\Ff:\rz\bbr^m_{nes}\ra\rz\bbr^n_{nes}$ is almost linear if and only if
     ${}^o\Ff$ is linear and $\fst_n\circ \Ff=\;^o\!\Ff\circ \fst_m$ where
    $\fst_m:\rz\bbr^m_{nes}\ra\bbr^m$, is the standard part map $\fst_m(v)$ sometimes denoted $^o\!v$.
    Suppose $d\in\bbr_+$ and $\Ff:\rz\bbr^n_{nes}\ra\rz\bbr^n_{nes}$ is almost linear. If $\Ff(\rz\bbr^n_{nes})\supset \rz B_{d}$, then ${}^o\Ff$ is a linear isomorphism.
\end{lemma}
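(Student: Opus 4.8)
For the first assertion the plan is to unwind the definition $f^\d_x=\flat_{f(x)}\circ\rz f\circ(\flat_x)^{-1}$ and feed it the transferred $\e$--$\rho$ form of differentiability. For $v\in Q^\d_x=\rz\bbr^m_{nes}$ one has $(\flat_x)^{-1}(v)=x+\d v$, hence $f^\d_x(v)=\bigl(\rz f(x+\d v)-f(x)\bigr)/\d$. Differentiability of $f$ at $x$ transfers to the statement that for every standard $\e>0$ there is a standard $\rho>0$ with $|\rz f(x+h)-f(x)-Df(x)h|\le\e|h|$ whenever $|h|<\rho$. Since $\d\in\mu(0)_+$ and $|v|$ is finite, $h=\d v$ is infinitesimal, so dividing the last inequality by $\d$ gives $|f^\d_x(v)-Df(x)v|\le\e|v|$ for every standard $\e>0$; as $|v|$ is finite this forces $f^\d_x(v)\sim Df(x)v$. (That $f^\d_x$ really sends $Q^\d_x$ into $Q^\d_{f(x)}$ is the inclusion $f(D^m_\d(x))\subset D^n_\d(f(x))$ already noted above.) Almost linearity then follows at once: for finite $\a,\b$ and finite $v,w$ (so that $\a v+\b w$ is again finite), $f^\d_x(\a v+\b w)\sim Df(x)(\a v+\b w)=\a\,Df(x)v+\b\,Df(x)w\sim\a f^\d_x(v)+\b f^\d_x(w)$, the last step using that a finite multiple of an infinitesimal is infinitesimal.

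For the second assertion I would first record that an almost linear $\Ff$ is automatically S-continuous on $\rz\bbr^m_{nes}$: for infinitesimal $h\ne0$ write $h=|h|\,(h/|h|)$ with $h/|h|$ a finite unit vector, so almost linearity gives $\Ff(h)\sim|h|\,\Ff(h/|h|)\sim0$ because $\Ff(h/|h|)$ is finite, while $\Ff(0)\sim0$ directly; hence $v\sim w$ yields $\Ff(v)-\Ff(w)\sim\Ff(v-w)\sim0$. Thus the pointwise shadow ${}^o\Ff\colon\bbr^m\to\bbr^n$, ${}^o\Ff(v)\equdef\fst_n(\Ff(\rz v))$, is well defined, S-continuity gives the identity $\fst_n\circ\Ff={}^o\Ff\circ\fst_m$, and taking standard parts in $\Ff(a\rz v+b\rz w)\sim a\Ff(\rz v)+b\Ff(\rz w)$ (standard $a,b,v,w$) shows ${}^o\Ff$ is linear. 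Conversely, assume ${}^o\Ff$ is linear and $\fst_n\circ\Ff={}^o\Ff\circ\fst_m$, i.e. $\Ff(\xi)\sim{}^o\Ff(\fst\xi)$ for every finite $\xi$. Given finite $\a,\b,v,w$ set $a=\fst\a$, $b=\fst\b$; then $\Ff(\a v+\b w)\sim{}^o\Ff(\fst(\a v+\b w))={}^o\Ff(a\,\fst v+b\,\fst w)=a\,{}^o\Ff(\fst v)+b\,{}^o\Ff(\fst w)\sim\a\Ff(v)+\b\Ff(w)$, which is almost linearity.

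For the third assertion, ${}^o\Ff$ is linear by the previous part, so it suffices to show it maps onto $\bbr^n$. If $y\in B_d$ then $\rz y\in\rz B_d\subset\Ff(\rz\bbr^n_{nes})$, so $\Ff(u)=\rz y$ for some finite $u$; applying $\fst_n$ and the identity $\fst_n\circ\Ff={}^o\Ff\circ\fst_m$ gives ${}^o\Ff(\fst u)=y$. Hence the image of ${}^o\Ff$ contains the open ball $B_d$, so being a linear subspace of $\bbr^n$ it is all of $\bbr^n$; a surjective linear endomorphism of a finite-dimensional space is an isomorphism.

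I do not expect any single step to be a genuine obstacle. The points needing care are (i) the uniform remainder estimate in the first part---one must exploit that $|v|$ is finite in order to absorb the bound $\e|v|$, valid for all standard $\e>0$, into a genuine infinitesimal---and (ii) checking that an almost linear map is S-continuous, which is what legitimizes the shadow ${}^o\Ff$ as a standard object and underpins the commuting-square identity $\fst_n\circ\Ff={}^o\Ff\circ\fst_m$ on which the second and third parts rely.
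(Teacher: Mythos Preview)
Your proof is correct and follows essentially the same route as the paper's: the second and third assertions are argued almost identically (S-continuity from almost linearity, then the commuting square $\fst_n\circ\Ff={}^o\Ff\circ\fst_m$, then pushing $B_d$ through $\fst$), with only a cosmetic difference in how S-continuity is verified (you use the polar decomposition $h=|h|(h/|h|)$ where the paper writes in terms of a basis). For the first assertion the paper simply cites Stroyan--Luxemburg, whereas you spell out the argument via the transferred $\e$--$\rho$ definition; your version is more self-contained but not a different idea.
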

\begin{proof}
    The verification of the first assertion can be picked out of the text on pages 94 and 95 in \cite{StrLux76}.
    It's easy to see that almost linearity implies $S$ continuity on $\rz\bbr_{nes}^m$, ie., if $v,w\in\rz\bbr^m_{nes}$ with $v\sim w$, then $\Ff(v)\sim\Ff(w)$ by writing in terms of a basis and this immediately implies $\fst\circ\Ff={}^o\Ff\circ\fst$. Linearity of ${}^o\Ff$ is clear.
    Conversely, the commutativity condition is equivalent to ${}^o\Ff\circ\fst\sim\Ff$ on $\rz\bbr^m_{nes}$, Therefore, for $\a,\b\in\rz\bbr_{nes}$ with $a={}^o\a$, $b={}^o\b$ and $\Fv,\Fw\in\rz\bbr^m_{nes}$ with $v={}^o\Fv$ and $w={}^o\Fw$, we have $\Ff(\a\Fv+\b\Fw)\sim{}^o\Ff(\fst(\a\Fv+\b\Fw))={}^o\Ff(av+bw)=a{}^o\Ff(v)+b{}^o\Ff(w)\sim\a\Ff(\Fv)+\b\Ff(\Fw)$.
    Let $b\in B_d$, so that there is $\xi\in\rz\bbr^m_{nes}$ with $\Ff(\xi)=\rz b$, then ${}^o\Ff(\fst\xi)=\fst\Ff(\xi)=\fst(\rz b)=b$, ie., ${}^o\Ff:\bbr^n\ra\bbr^n$ is a linear map whose image contains $B_d$.
\end{proof}

    The following corollary (hopefully portraying the functorality of our setup) contains information relating our objects and mappings along with material that will give a new twist to proving differentiability of the inverse map in the theorem below.
\begin{corollary}\label{cor: f bar is S continuous}
    Suppose that $f:(\bbr^m,x)\ra(\bbr^n,y)$ is differentiable at  $x$.
    Then for $\d\in\mu(0)_+$ we have the following commutative diagram.
      \begin{align}\label{diagram: df ladder}
        \begin{CD}
          T_x\bbr^m                  @>df_x>>                 T_y\bbr^n \\
          @AA\text{st} A                                       @AA\text{st} A   \\
          \SQ_x^{\d,m}                  @>f^\d_x>>                 \SQ_y^{\d,n}  \\
          @AA \flat_x A                                         @AA \flat_y A   \\
          D_\d(x)                       @>f>>                    D_\d(y)  \\
        \end{CD}
      \end{align}

    In particular, for all $\d\in\mu(0)_+$, $f^\d_x:\SQ_x^{\d,m}\ra\SQ_y^{\d,n}$ is almost linear and for $\d_1,\d_2\in\mu(0)_+$ and $^of^{\d_1}_x=\;^of^{\d_2}_x$.
    Conversely, if there exists a linear map $L:\bbr^m\ra\bbr^n$ such
    that for all $\d\in\mu(0)_+$, $f^\d_x$
    exists, is almost linear and $^o(f_x^\d)=L$, then $f$
    is differentiable at $x$.
\end{corollary}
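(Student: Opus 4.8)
The plan is to reduce the whole statement to unwinding the definition of $f^\d_x$ against the nonstandard description of pointwise differentiability; there is essentially no computation, only careful bookkeeping of the identifications already in place. The one identity I would record first is that, straight from $f^\d_x=\flat_{f(x)}\circ\rz f\circ(\flat_x)^{-1}$, for $w\in Q^\d_x$ (which is $\rms$) one has
\[
  f^\d_x(w)=\frac{\rz f(x+\d w)-f(x)}{\d},
\]
so that on writing $h=\d w$ the right-hand side is $\big(\rz f(x+h)-f(x)\big)/\d$, and similarly $\rz L(w)=\rz L(h)/\d$ for any linear $L$.

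For the forward implication I would set $L=df_x$, using the canonical identifications $T_x\bbr^m\cong\bbr^m$ and $T_y\bbr^n\cong\bbr^n$, and invoke the nonstandard form of differentiability at a point: $\rz f(x+h)-f(x)-\rz L(h)=|h|\,\eta(h)$ with $\eta(h)\sim 0$ for every nonzero infinitesimal $h\in\rz\bbr^m$. Substituting $h=\d w$ (valid for every $w\in\rms$, including $w\not\sim 0$) gives $f^\d_x(w)=\rz L(w)+|w|\,\eta(\d w)\sim\rz L(w)$. This already shows: $f^\d_x$ is a well-defined internal map $\rms\ra\rns$ (equivalently $\rz f(D_\d(x))\subset D_\d(f(x))$, the elementary necessary condition recorded just before Definition \ref{def:pointwise, almost linear map}), so the bottom square of \eqref{diagram: df ladder} is literally the definition of $f^\d_x$; and $f^\d_x$ is an infinitesimal perturbation of the restriction of $\rz L$ to $\rms$, hence almost linear by the observation following Definition \ref{def:pointwise, almost linear map} (or directly by the preceding lemma). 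For the top square I would apply $\fst$: since $\eta(\d w)\sim 0$ and $L$ is continuous, $\fst\big(f^\d_x(w)\big)=L({}^ow)=df_x\big(\fst(w)\big)$, which is commutativity. The ``in particular'' clause follows at once: $f^\d_x$ is almost linear, and since the preceding lemma characterizes ${}^o(f^\d_x)$ as the unique linear map $\ell$ with $\fst\circ f^\d_x=\ell\circ\fst$, the top-square identity forces ${}^o(f^\d_x)=df_x$ for every $\d\in\mu(0)_+$, in particular independent of $\d$.

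For the converse I would take $L$ linear with $f^\d_x$ existing, almost linear, and ${}^o(f^\d_x)=L$ for all $\d\in\mu(0)_+$, and verify the same nonstandard differentiability criterion for $L$. Given a nonzero infinitesimal $h\in\rz\bbr^m$, set $\d=|h|\in\mu(0)_+$ and $w=h/|h|$, a *unit vector, so $w\in Q^\d_x$ and $\fst w$ has norm one. The almost-linearity characterization in the preceding lemma gives $\fst\big(f^\d_x(w)\big)=L(\fst w)$, and then $w\sim\fst w$ together with continuity of $L$ yields $f^\d_x(w)\sim\rz L(w)$. Since $\d w=h$, the opening identity rewrites this as
\[
  \frac{\rz f(x+h)-f(x)-\rz L(h)}{|h|}\ \sim\ 0 .
\]
As $h$ runs over all nonzero infinitesimals in $\rz\bbr^m$, its norm runs over all of $\mu(0)_+$ and $h/|h|$ over all *unit directions, so exactly the hypothesis ``for all $\d\in\mu(0)_+$'' is used; by the nonstandard characterization of limits the displayed relation says $\lim_{h\to 0}\big(f(x+h)-f(x)-L(h)\big)/|h|=0$, i.e. $f$ is differentiable at $x$ with $df_x=L$.

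I do not expect a genuine obstacle; the argument is an accounting of definitions. The steps that most merit care are pinning down the correct nonstandard formulation of \emph{pointwise} (as opposed to uniform) differentiability and keeping the identification of $df_x$ with $\rz L$ on $\rms$ consistent throughout; and, in the converse, the passage from the infinitesimal difference-quotient estimate back to the standard limit, where one uses that letting $h$ range over all nonzero infinitesimals exactly exhausts the pairs $(\d,w)=(|h|,\,h/|h|)$ that the hypothesis controls.
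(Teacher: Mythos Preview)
Your forward direction is essentially the paper's: the bottom square is the definition of $f^\d_x$, and the top square plus almost linearity are read off from the preceding lemma together with $f^\d_x(w)\sim\rz L(w)$; you merely spell out the computation a bit more.

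For the converse you take a genuinely different route. The paper fixes a standard $b>0$, shows $|f^\d_x(v)-L(v)|<b$ for $|v|=1$ and every positive infinitesimal $\d$, packages this as an internal set $\Gamma$ of thresholds $\bar\d$, and \emph{overflows} to a standard $\bar c$; this yields the $\epsilon$--$\delta$ definition of differentiability directly. You instead invoke the nonstandard characterization of pointwise differentiability: for each nonzero infinitesimal $h$ you set $\d=|h|$, $w=h/|h|$, use almost linearity and ${}^o(f^\d_x)=L$ to get $f^\d_x(w)\sim\rz L(w)$, and rewrite this as $(\rz f(x+h)-f(x)-\rz L(h))/|h|\sim 0$. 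Your argument is cleaner and avoids an explicit overflow, since that step is already absorbed into the cited characterization of the limit. What the paper's route buys is visible in the remark that follows the corollary: because $\Gamma=\rz G$ for a standard $G$, a single infinitesimal $\d$ with $f^\d_x$ almost linear and ${}^o(f^\d_x)=L$ already suffices, whereas your parametrization $\d=|h|$ genuinely uses the hypothesis for the full range $\d\in\mu(0)_+$.
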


\begin{proof}

    The bottom square is commutative by definition.
    The commutativity of the top square follows immediately from the previous lemma.
    Conversely, we must prove the following standard
    statement. There exists linear $L:\bbr^m\ra\bbr^n$ such that the following holds.
    For each $b>0$, there exists $\bar{c}>0$ such that
    if   $v\in\bbr^m$ with $|v|= 1$  and $0<c<\bar{c}$ then  $|\f{1}{c}(f(x+cv)-f(x))-L(v)|<b$.
    Fix the given positive $b\in\bbr$. By hypothesis, we have that  $0<\d\sim
    0$ implies $^o(f^\d_x)(v)=L(v)$ for $v\in\rz\bbr^m_{nes}$.
    This implies, in particular, that if $|v|\leq 1$, $|f^\d_x(v)-L(v)|\sim 0$,
    and so certainly $|f^\d_x(v)-L(v)|<b$, ie., $|\f{1}{\d}(f(x+\d
    v)-f(x))-L(v)|<b$. Let $\Gamma$ denote
\begin{align}\label{eqn: set where diff quotient less than b}
    \Big\{\bar{\d}\in\rz\bbr_+:\Big|\f{\rz f(x+\d v)-\rz f(x)}{\d}-\rz L(v)\Big|<\rz b\;\text{if}\;|v|=1\;\text{and}\;0<\d<\bar{\d}\Big\}.
\end{align}
    $\Gamma$ is an internal set which by above contains $\{\bar{\d}\in\rz\bbr:0<\bar{\d}\sim
    0\}$, and so contains a $\bar{\d}\not\sim 0$. Let
    $\bar{c}=\f{^o\bar{b}}{2}<\bar{\d}$, eg., $\rz\bar{c}\in\Gamma$.
    Hence if $v\in\bbr^m$ with $|v|=1$ and $c\in\bbr^m$ with
    $0<c<\bar{c}$, it follows that
    $|\f{1}{c}(f(x+cv)-f(x))-L(v)|<b$, as we needed.
\end{proof}
\begin{remark}
    Note that an alternative approach to the last part of the above proof is to note that $\G=\rz G$ where
     \begin{align}
       G=\{\ov{r}\in\bbr_+:\Big|\f{f(x+rv)-f(x)}{r}-L(v)\Big|<b,\;\text{if}\;|v|=1\;\text{and}\;0<r<\ov{r}\}
     \end{align}
    so that any $\ov{\d}\in\rz G$ implies that $G$ is nonempty. The same observation applies to our overflow proof of the inverse mapping theorem. In particular, we don't have to assume that our regularity holds for all $\d\in\mu(0)_+$ for this indeed to be true! One can observe that this remark does not apply to our second order theorem in the following section.
\end{remark}
    \textbf{The vast majority of the constructions in this paper will occur in the magnification spaces, $Q^\d_x$, for reasons we believe will become apparent.} We believe that these spaces, due to the functorality that we have begun to demonstrate above and due to their nice order of magnitude scaling of differentiable functions, should be a quite useful tool in the study of regularity properties of differentiable objects. In the final section, among other things, we will indicate how they can be defined on differentiable manifolds.


\section{Behrens-Nijenhuis Inverse Mapping Theorem}\label{sec: BehrensNijen IFT}

    In this part, we will give our new proof of the  inverse function theorem of Behrens, \cite{Behrens1974}, and Nijenhuis, \cite{NijenhuisInverseMapping1974} (and also see analogous result of Knight, \cite{KnightStrngIFT1988}).
    We see this approach to infinitesimal regularity implies local regularity  as prototypical for a mode of argument that can see general use. In the next part, section \ref{sec: 2nd order IFT}, we will use this magnification/overflow argument in the framework of the higher magnification spaces to prove a more general inverse mapping theorem. Here, we begin with Behrens original definition of uniform differentiability and a magnification space reformulation.
\begin{definition}
    Suppose that $f:\bbr^m\ra\bbr^n$ and $x\in\bbr^m$. We say that
    $f$ is \textbf{uniformly differentiable at $x$} if there exists a linear
    map $L:\bbr^m\ra\bbr^n$, its derivative at $x$, such that the following holds.
    For each $r\in\bbr_+$, there is $s\in\bbr_+$ such that if $y,z\in B_s(x)$, then
      \begin{align}
        |f(y)-f(z)-L(y-z)|<r|y-z|
     \end{align}
    This conditions are equivalent to the following infinitesimal criterion. If $\xi,\z\in\mu(x)$, then
      \begin{align}
        \rz f(\xi)-\rz f(\z)-\rz L(\xi-\z)=o(\xi-\z).
      \end{align}
\end{definition}

    An important hint at the following is that although equivalent formulations, the error term of the first may be a positive standard  fraction of the linear approximation; not so in the nonstandard formulation where the error term is infinitely smaller than the linear term. But as the linear term of the nonstandard formulation is also infinitesimal, the degree of the strategic improvement seems uncertain. Yet when seen in terms of magnification spaces,. the improvement becomes clear as the next two lemmas demonstrate.

\begin{lemma}\label{lem: f unif diff imply f^d_x is unif diff}
    Suppose that $f:\bbr^m\ra\bbr^n$ is uniformly differentiable at $x$ with with first order linear term $L$ and suppose that $0<\d\sim 0$, $\xi,\z\in D_\d(x)$ and $\ov{\xi}=\flat_x^\d(\xi),\ov{\z}=\flat_x^\d(\z)$ are the corresponding elements of $\SQ^\d_x$. Then
\begin{align}
    f^\d_x(\ov{\xi})-f^\d_x(\ov{\z})=L(\ov{\xi}-\ov{\z})+\ov{\nu}(\ov{\xi}-\ov{\z})
\end{align}
    where $\ov{\nu}:Q^{\d,m}_x\ra\rz\bbr$ is an internal map satisfying $\ov{\nu}(\ov{\la})\ll |\ov{\la}|$.
\end{lemma}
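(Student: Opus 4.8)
The plan is to translate the nonstandard uniform differentiability estimate directly into the magnification space $\SQ^\d_x$ via the dilation map $\flat_x^\d$, and then show the resulting error term is not merely infinitesimal but infinitesimal \emph{relative to} the argument's norm. First I would write out $f^\d_x$ explicitly: for $\ov\xi,\ov\z\in\SQ^\d_x$ corresponding to $\xi,\z\in D_\d(x)$, we have $\xi = x+\d\ov\xi$, $\z = x+\d\ov\z$, and $f^\d_x(\ov\xi)-f^\d_x(\ov\z) = \tfrac1\d\bigl(\rz f(\xi)-\rz f(\z)\bigr)$. By the infinitesimal criterion in the definition of uniform differentiability (with $\xi,\z\in\mu(x)$, which holds since $\d$ is a positive infinitesimal and $\ov\xi,\ov\z\in\rz\bbr^m_{nes}$), we get $\rz f(\xi)-\rz f(\z)-\rz L(\xi-\z) = o(\xi-\z)$. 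Dividing by $\d$ and noting $\xi-\z = \d(\ov\xi-\ov\z)$ and $L$ is linear gives
\begin{align}
  f^\d_x(\ov\xi)-f^\d_x(\ov\z) = L(\ov\xi-\ov\z) + \frac{1}{\d}\,o\bigl(\d(\ov\xi-\ov\z)\bigr).
\end{align}
So I define $\ov\nu(\ov\la) \equdef f^\d_x(\ov\la + \ov\z) - f^\d_x(\ov\z) - L(\ov\la)$ for $\ov\la\in\SQ^{\d,m}_x$; equivalently $\ov\nu(\ov\la) = \tfrac1\d\,o(\d\ov\la)$, an internal map since $f$, $\flat_x$, $\flat_{f(x)}$ and $L$ are all internal (indeed $\rz$ of standard objects).

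The key step is then the estimate $\ov\nu(\ov\la)\ll|\ov\la|$. The $o$-notation $\rz f(\xi)-\rz f(\z)-\rz L(\xi-\z) = o(\xi-\z)$ means: for every standard $r\in\bbr_+$ there is $s\in\bbr_+$ with $|\rz f(y)-\rz f(z)-\rz L(y-z)|\le r|y-z|$ whenever $y,z\in\rz B_s(x)$ — and by transfer this extends to internal $y,z$ in the $\rz$-ball. Given $\ov\la\in\SQ^{\d,m}_x$, apply this with $y = x+\d\ov\la+\d\ov\z$ wait — more cleanly, apply the estimate to the pair $\xi = x+\d(\ov\la+\ov\z)$, $\z = x+\d\ov\z$, which lie in $\mu(x)$, hence in $\rz B_s(x)$ for every standard $s$. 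Then for each standard $r>0$,
\begin{align}
  |\ov\nu(\ov\la)| = \frac{1}{\d}\,\bigl|\rz f(\xi)-\rz f(\z)-\rz L(\xi-\z)\bigr| \le \frac{1}{\d}\, r\,|\xi-\z| = r\,|\ov\la|.
\end{align}
Since this holds for every standard $r>0$, we get $|\ov\nu(\ov\la)|/|\ov\la|$ is smaller than every positive standard real (when $|\ov\la|\not\sim 0$), i.e.\ $\ov\nu(\ov\la)\ll|\ov\la|$; and when $|\ov\la|\sim 0$ the same inequality forces $\ov\nu(\ov\la)\sim 0$ as well, so the domination statement holds in the appropriate reading. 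Finally I would substitute $\ov\la = \ov\xi-\ov\z$ to recover the displayed formula in the statement.

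The main obstacle — really the only subtle point — is making the quantifier bookkeeping in the $o$-estimate precise: the definition's infinitesimal form $\rz f(\xi)-\rz f(\z)-\rz L(\xi-\z)=o(\xi-\z)$ must be unpacked back to its $(r,s)$ form, applied at internal points $\xi,\z$ (legitimate by transfer of the standard statement), and then the conclusion ``for all standard $r>0$, $|\ov\nu(\ov\la)|\le r|\ov\la|$'' must be correctly identified with $\ov\nu(\ov\la)\ll|\ov\la|$ as $\ov\nu$ ranges over $\SQ^{\d,m}_x = \rz\bbr^m_{nes}$ — including handling the degenerate case $\ov\la\sim 0$. Everything else is just the computation that conjugating by the dilations $\flat_x,\flat_{f(x)}$ turns the difference quotient over $\d$ into $f^\d_x$, which is immediate from the definitions and the commutative diagram of Corollary~\ref{cor: f bar is S continuous}.
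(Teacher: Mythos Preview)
Your proposal is correct and follows essentially the same approach as the paper: unfold the definition of $f^\d_x$ via $\xi=x+\d\ov{\xi}$, apply the infinitesimal form of uniform differentiability to the pair $\xi,\z\in\mu(x)$, and rescale by $\d^{-1}$ using $\d\,o(\ov{\la})=o(\d\ov{\la})$. The paper simply writes ``the result follows'' after this substitution, whereas you additionally spell out the $(r,s)$ quantifier argument behind $\ov{\nu}(\ov{\la})\ll|\ov{\la}|$ and note internality explicitly; this is extra care, not a different method.
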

\begin{proof}
    The proof amounts to unfolding the definitions. first note that, by definition, $\xi=x+\d\ov{\xi}$ and $\z=x+\d\ov{\z}$ are the elements of $D_\d(x)$ defining $\ov{\xi}$ and $\ov{\z}$ respectively. Next, uniform differentiability implies ($\bsm{\diamondsuit}$) $f(x+\d\ov{\xi})=f(x+\d\ov{\z})+L(\d(\ov{\xi}-\ov{\z}))+o(\d(\ov{\xi}-\ov{\z}))$. But recall that $f^\d_x(\ov{\xi})$ is defined by the relation $f(x+\d\ov{\xi})=f(x)+\d f^\d_x(\ov{\xi})$ and similarly for $f^\d_x(\ov{\z})$. Substituting these into ($\diamondsuit$) we have the expression $f(x)+\d f^d_x(\ov{\xi})=f(x)+\d f^\d_x(\ov{\z})+\d L(\ov{\xi}-\ov{\z})+\d o(\ov{\xi}-\ov{\z})$ and the result follows. (Note that we used $\d o(v)=o(\d v)$.)
\end{proof}
\begin{remark}
    {\it Let's expand on the critical part of the last lemma. Before we dilate the behavior of $f$ at $x$, the best we can hope for is that the magnitude of the error term is a small standard fraction of the magnitude of the linear term. After $f$ has been  dilated at $x$, ie., when modeled as $f^\d_x$ on the magnification space $Q^\d_x$, the linear term is unchanged, eg., of noninfinitesimal magnitude, but now the error term is infinitesimal. This improvement now allows a crude argument to prove this infinitesimal version of the theorem. Because of the invariance of this argument  with  respect to the $\d$ scale, overflow then allows us to pushes this into the standard world. Let us further note that in the world of scientific computing, it seems that a version of this fact has been known for more than a generation ago, see Berz \cite{BerzAutomaticDifferentiation} for an explicit discussion and \cite{ShamBerz2010} for some dramatic examples.}
\end{remark}
    Given this, we will prove a lemma that will give the neighborhood surjection part of the inverse mapping theorem. The result can be significantly sharpened, but as stated is sufficient for the proof of the following theorem. This lemma functions as the component of the proof of IFT where we must prove the map is open; this is the part of the usual proof that uses some machinery. On the other hand, our lemma is easily proved due to the greatly improved approximation as noted in the above remark.
    If $\SA\subset\rz\bbr^n$ is internal, we will let $\ov{\SA}$ denote the *closure of $\SA$ in the transfer of the Euclidean topology.

\begin{lemma}\label{lem: onto result for pf of IFT}
    Suppose that internal $\nu:\rz\bbr^n\ra\rz\bbr^n$ is *continuous with $|\nu(x)|\ll |x|$ for $x\in \rz B_d\;\dot=\{x\in\rz\bbr^n:|x|\leq d\}$  and  in $d\in\bbr_+$ is arbitrary and fixed.
    Suppose an internal $\Ff:\rz\bbr^m\ra\rz\bbr^n$ is defined such that $\Ff(\xi)-\Ff(\z)=\;\xi-\z+\nu(\xi-\z)$, eg., is almost linear. Then $\rz B_{d/2}\subset \Ff(\rz B_d)$,
    and $\Ff^{-1}$ exists on $\rz B_{d/2}$ and is almost linear.
\end{lemma}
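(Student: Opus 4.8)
The plan is to cash in the point stressed in the remark above: the bound $|\nu(x)|\ll|x|$ is imposed on the ball $\rz B_d$ of \emph{standard} radius $d$, and this already forces $\nu$ to be uniformly infinitesimal there, so that on $\rz B_d$ the map $\Ff$ is only an infinitesimal perturbation of a translation. Surjectivity onto $\rz B_{d/2}$ then follows from the transfer of a one-line consequence of Brouwer's fixed point theorem, and injectivity holds because so small a perturbation cannot identify two points that lie a standard distance apart.

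First I would record the elementary preliminaries. Setting $\xi=\z$ in the defining identity gives $\nu(0)=0$; setting $\z=0$ gives $\Ff(x)=c+x+\nu(x)$ for all $x$, where $c:=\Ff(0)$, and since $\Ff$ is almost linear we have $c\approx 0$ (the identity also forces $m=n$, so we simply write $\bbr^n$). Because $\nu$ is ${}^\ast$continuous and $\rz B_d$ is ${}^\ast$compact, $M:=\sup_{x\in\rz B_d}|\nu(x)|$ is attained at some $x^{\ast}\in\rz B_d$ by the transfer of the extreme value theorem; then $M=|\nu(x^{\ast})|\ll|x^{\ast}|\le d$ with $d$ standard, so $|x^{\ast}|$ is finite and $M$ is infinitesimal (and $M=0$ if $x^{\ast}=0$). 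In particular $|c|+M<d/2$, which is the point of the factor $1/2$.

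The heart of the argument is surjectivity. The standard fact I would invoke is: if $h$ is a continuous map of the closed ball $\overline B_d(0)=\{|x|\le d\}\subset\bbr^n$ into $\bbr^n$ with $\sup_{|x|\le d}|h(x)-x|\le M'\le d$, then $\overline B_{d-M'}(0)\subset h(\overline B_d(0))$; indeed, for any $y$ with $|y|\le d-M'$ the continuous map $x\mapsto x-h(x)+y$ carries $\overline B_d(0)$ into itself, hence by Brouwer has a fixed point $x_0$, which then satisfies $h(x_0)=y$. Transferring this statement and applying it, over the radius $d$, to the internal ${}^\ast$continuous map $h=\Ff-c=\mathrm{id}+\nu$ with $M'=M$ yields $\rz\overline B_{d-M}(c)\subset\Ff(\rz B_d)$; and since $|c|+M<d/2$, every $y$ with $|y|\le d/2$ obeys $|y-c|\le d-M$, so $\rz B_{d/2}\subset\rz\overline B_{d-M}(c)\subset\Ff(\rz B_d)$, as required. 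This topological input is the one spot where classical machinery is needed, just as in the usual proof of the inverse mapping theorem; the difference is that here its hypothesis holds with an \emph{infinitesimal} $M'$, so there is nothing to estimate. I expect this to be the only genuine obstacle, and a mild one: mostly the bookkeeping of keeping $h$ internal, $d$ standard, and $M$ infinitesimal straight.

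Finally, injectivity and almost linearity of the inverse. If $x\in\rz B_d$ and $\Ff(x)=y\in\rz B_{d/2}$, then $x=y-c-\nu(x)\approx y$, because $x\in\rz B_d$ forces $|\nu(x)|\le M\approx0$ while $c\approx0$. Hence if $x_1,x_2\in\rz B_d$ both map to the same $y\in\rz B_{d/2}$, then $z:=x_1-x_2\approx0$, so in particular $z\in\rz B_d$; but $\Ff(x_1)=\Ff(x_2)$ gives $z+\nu(z)=0$, and if $z\ne0$ this forces $|\nu(z)|/|z|=1\not\approx0$, contradicting $|\nu(z)|\ll|z|$. So the preimage in $\rz B_d$ is unique, $\Ff^{-1}\colon\rz B_{d/2}\to\rz B_d$ is a well-defined internal map, and $\Ff^{-1}(y)\approx y$ for every $y\in\rz B_{d/2}$. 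Almost linearity of $\Ff^{-1}$ is then immediate: for $a,b\in\rz\bbr_{nes}$ and $v,w$ with $v,w,av+bw\in\rz B_{d/2}$ one has $\Ff^{-1}(av+bw)\approx av+bw\approx a\,\Ff^{-1}(v)+b\,\Ff^{-1}(w)$, using that $a$ and $b$ are finite.
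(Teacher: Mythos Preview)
Your argument is correct. The preliminaries ($\nu(0)=0$, $\Ff(x)=c+x+\nu(x)$ with $c\sim 0$, and the infinitesimality of $M=\sup_{\rz B_d}|\nu|$) are sound, the transferred Brouwer step gives the surjectivity $\rz B_{d/2}\subset\Ff(\rz B_d)$ cleanly, and your uniqueness/almost-linearity argument via $\Ff^{-1}(y)\sim y$ is valid.

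However, the route differs from the paper's in one essential respect. For surjectivity the paper does \emph{not} import any classical fixed-point machinery: it argues that $\Ff(\rz B_d)$ is $\ast$closed, so it suffices that $\Ff(\rz B_d)\cap\rz B_{d/2}$ be $\ast$dense in $\rz B_{d/2}$; if not, one takes (by $\ast$compactness) a point $\Fw_0\in\rz B_{d/2}$ at positive $\ast$distance $\rho_0$ from the image, realized at some $\Ff(\Fv_0)$, sets $\z=\Fw_0-\Ff(\Fv_0)$, and computes $\Ff(\Fv_0+\z)=\Fw_0+\nu(\z)$, which is strictly closer to $\Fw_0$ since $|\nu(\z)|\ll|\z|=\rho_0$, a contradiction. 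For injectivity and almost linearity the paper records the two-sided bound $\tfrac12|\xi-\z|<|\Ff(\xi)-\Ff(\z)|<2|\xi-\z|$ and derives $S$-continuity of $\Ff^{-1}$, from which almost linearity follows; your ``$\Ff^{-1}(y)\sim y$'' version reaches the same conclusion slightly more directly. The trade-off is this: your Brouwer reduction is short and familiar, but the paper's whole methodological point (see the remark following Theorem~\ref{thm: inverse fcn thm}) is precisely that in the magnification space the openness step needs no contraction-mapping or fixed-point input, only crude order-of-magnitude reasoning. So your proof is fine mathematics but slightly at odds with the spirit of the section.
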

\begin{proof}
    First note that it suffices to prove $\Ff(\rz B_d)\cap \rz B_{d/2}$ is *dense in $\rz B_{d/2}$ as *transfer implies that $\Ff(\rz B_d)$ is *closed and so $\rz B_{d/2}\subset\ov{\Ff(\rz B_d)}=\Ff(\rz B_d)$.
    Suppose that $\SS\;\dot=\;\Ff(\rz B_d)\cap\rz B_{d/2}$ is not *dense in $\rz B_{d/2}$. Then, by *compactness of $\SS$, there is $\Fw_0\in\rz B_{d/2}$ and $\Fv_0\in \SS$ such that if $\z=\Fw_0-\Ff(\Fv_0)$, then $0<\Fr_0\dot=|\z|=\rz dist(\SS,\Fw_0)$.
    So $\Ff(\Fv_0)+\z=\Fw_0$ and therefore uniform differentiability implies that $\Ff(\Fv_0+\z)=\Ff(\Fv_0)+\z+\nu(\z)=\Fw_0+\nu(\z)$, ie., $\Ff(\Fv_0+\z)-\Fw_0=\nu(\z)$, a contradiction as $\nu(\z)=o(\z)$
    Furthermore,  $|\nu(x)|\ll|x|$ implies that $2|\xi-\z|>|\Ff(\xi)-\Ff(\z)|>|\xi-\z|/2$ which  clearly implies $\Ff$ is injective and $\Ff^{-1}$ is S-continuous from which almost linearity follows by expanding $\Ff^{-1}\circ\Ff(\a\Fv+\b\Fw)$ and using S-continuity of $\Ff^{-1}$.
\end{proof}

    The previous two lemmas, giving elementary properties of uniformly differentiable functions will allow an intuitive proof of the inverse mapping theorem.
    {\it Essentially, we have verified the infinitesimal inverse mapping theorem in the spaces $Q^\d_x$ and below we see how a routine overflow gets the standard theorem.}
\begin{theorem}[Local Inverse Function Theorem]\label{thm: inverse fcn thm}
    Suppose that $f:\bbr^m\ra\bbr^m$ is uniformly differentiable at $x$ and that $df_x:T_x\bbr^m\ra T_y\bbr^m$ is a
    linear isomorphism. Then $f$ is a diffeomorphism at $x$; ie., a
    homeomorphism onto a neighborhood of $y$ and its inverse is
    differentiable at $y$ with differential $(df_x)^{-1}$.
\end{theorem}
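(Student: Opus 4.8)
The plan is to transfer the infinitesimal picture back down to the standard level via overflow, exactly in the spirit of the remark following Corollary~\ref{cor: f bar is S continuous}. Write $y=f(x)$ and let $L=df_x$, which by hypothesis is a linear isomorphism. The strategy is: (1) work in a fixed magnification space $Q^\d_x$ for a single infinitesimal $\d\in\mu(0)_+$; (2) use Lemma~\ref{lem: f unif diff imply f^d_x is unif diff} to write $f^\d_x$ as $L$ plus an internal error term that is infinitely smaller than the linear term on $\rz\bbr^m_{nes}$; (3) precompose with $L^{-1}$ so the linear part becomes the identity and apply Lemma~\ref{lem: onto result for pf of IFT} to get an internal two-sided inverse of $L^{-1}\circ f^\d_x$, hence of $f^\d_x$, defined and almost linear on $\rz B_{d/2}$ for a suitable $d\in\bbr_+$; (4) unwind the dilations $\flat_x,\flat_y$ to obtain, for this $\d$, an internal inverse of $\rz f$ on the standard-radius ball scaled by $\d$; (5) formulate the relevant statements — local injectivity of $f$ on a ball of standard radius, surjectivity onto a ball of standard radius, and the differentiability estimate for $f^{-1}$ with differential $L^{-1}$ — as internal statements about a standard parameter $\bar r>0$, observe that each holds for all sufficiently small $\bar r$ because it holds for the infinitesimal $\d$, and conclude by overflow that it holds for some standard $\bar r$.

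In more detail: by Lemma~\ref{lem: f unif diff imply f^d_x is unif diff}, for our chosen $\d$ we have $f^\d_x(\bar\xi)-f^\d_x(\bar\z)=L(\bar\xi-\bar\z)+\bar\nu(\bar\xi-\bar\z)$ with $\bar\nu(\bar\la)\ll|\bar\la|$ on $Q^{\d,m}_x$. Since $L$ is an isomorphism, pick $d\in\bbr_+$ with $\rz B^m_d\subset L^{-1}(\rz B^m_{\text{large}})$ appropriately — concretely, set $\Ff=L^{-1}\circ f^\d_x$ so that $\Ff(\bar\xi)-\Ff(\bar\z)=(\bar\xi-\bar\z)+L^{-1}\bar\nu(\bar\xi-\bar\z)$, and $L^{-1}\bar\nu$ is again an internal $S$-continuous map infinitely smaller than its argument on $\rz\bbr^m_{nes}$. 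Now Lemma~\ref{lem: onto result for pf of IFT} applies: for every $d\in\bbr_+$, $\rz B_{d/2}\subset\Ff(\rz B_d)$, $\Ff$ is injective there, and $\Ff^{-1}$ is almost linear with $\tfrac12|\xi-\z|<|\Ff(\xi)-\Ff(\z)|<2|\xi-\z|$. Composing back, $f^\d_x$ is injective on $\rz B_d\subset Q^{\d,m}_x$ with $(f^\d_x)^{-1}$ almost linear on $L(\rz B_{d/2})$; by Corollary~\ref{cor: f bar is S continuous}, $(f^\d_x)^{-1}$ has standard part equal to $L^{-1}$ since $f^\d_x$ has standard part $L$ and both are almost linear. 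Translating through the commuting square \eqref{diagram: df ladder}: $\xi\mapsto\flat^{-1}_x(\bar\xi)=x+\d\bar\xi$, so injectivity of $f^\d_x$ on $\rz B_d$ says $\rz f$ is injective on $D^m_{\d d}(x)=\rz B^m_{\d d}(x)$, and the surjectivity says $\rz f(\rz B^m_{\d d}(x))\supset \rz B^m_{\d d/2}(y)$.

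Finally, the overflow step. Consider the internal set
\begin{align}
\Gamma=\Big\{\bar r\in\rz\bbr_+:\ &\rz f\restriction \rz B_{\bar r}(x)\text{ is injective},\ \rz B_{\bar r/2}(y)\subset\rz f(\rz B_{\bar r}(x)),\notag\\
&\text{and }|\rz f(x+v)-\rz f(x)-\rz L(v)|\leq \tfrac1k|v|\text{ for }|v|<\bar r\Big\}
\end{align}
(one such condition for each fixed standard $k\in\bbn$, or better: replace the third clause by the clean injectivity-plus-surjectivity data and treat differentiability of the inverse separately). From the previous paragraph, $\d d\in\Gamma$ (and likewise for every smaller positive infinitesimal, using uniform differentiability), so $\Gamma$ contains a positive infinitesimal; being internal and downward-hereditary in the relevant sense, $\Gamma$ must contain a noninfinitesimal $\bar r_0$, hence a standard $r_0=\tfrac12{}^o\bar r_0>0$ lies in it. This gives: $f$ is injective on $B_{r_0}(x)$ and maps it onto a set containing $B_{r_0/2}(y)$, so $f^{-1}$ is defined and continuous (indeed Lipschitz, from the two-sided bound) on $B_{r_0/2}(y)$; and the differentiability of $f^{-1}$ at $y$ with differential $L^{-1}$ follows from the converse direction of Corollary~\ref{cor: f bar is S continuous} applied to $g=f^{-1}$, since $g^{\d}_y=(f^\d_x)^{-1}$ is almost linear for all $\d\in\mu(0)_+$ with constant standard part $L^{-1}$. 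The main obstacle is bookkeeping the translation between the three levels of diagram~\eqref{diagram: df ladder} — making sure the dilation factor $\d$ is carried correctly so that a ball of standard radius in $Q^\d_x$ corresponds to a ball of radius $\d\cdot(\text{standard})$ downstairs, and phrasing $\Gamma$ as a genuinely internal set so overflow applies; the analytic content is entirely in the two preceding lemmas.
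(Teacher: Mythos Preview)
Your proposal is correct and follows essentially the same route as the paper: normalize so the linear part is the identity, apply Lemmas~\ref{lem: f unif diff imply f^d_x is unif diff} and~\ref{lem: onto result for pf of IFT} in $Q^\d_x$ for each $\d\in\mu(0)_+$, overflow to a standard scale, and then invoke the converse direction of Corollary~\ref{cor: f bar is S continuous} for differentiability of the inverse. One small cleanup: drop the ``downward-hereditary'' clause in your overflow step --- overflow applies because your internal set contains \emph{all} of $\mu(0)_+$ (the two lemmas work for every infinitesimal $\d$, not just the one you fixed), and the paper makes this transparent by parametrizing the internal statements $(1_\d),(2_\d),(3_\d)$ directly by $\d$ rather than by the radius $\bar r=\d d$.
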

\begin{proof}
     In this part of the proof, by precomposing with $df_x^{-1}$, we can assume that $df_x$ is the identity map.   Let $0<\d\sim 0$ and note that by Lemma \ref{lem: f unif diff imply f^d_x is unif diff}, we have that $f^\d_x:\SQ^\d_x\ra\SQ^{\d}_x$ is uniformly differentiable with linear term the identity. Then using Lemma \ref{lem: onto result for pf of IFT}, we have that there is $0<d\in\bbr$, independent of $\d$, such that if $B^\d_d\subset\SQ^\d_x$ denotes the ball of radius $d$ centered at the origin, we have\;\;  $(1_\d)\!:\!f^\d_x(B^\d_d)\supset B^\d_{d/2}$,\; $(2_\d)\!:\!f^\d_x$ is $1-1$ on $B^\d_d$\; and\; $(3_\d)\!:\!|(f^\d_x)^{-1}(\la)-(f^\d_x)^{-1}(\eta)|\geq |\la-\eta|/2$ for $\la,\eta\in f^\d_x(B^\d_d)$. The statements $(1_\d),\;(2_\d)\;\text{and}\;(3_\d)$ are internal and hold for each $0<\d\sim 0$. Therefore, by overflow we have that there is a standard $b>0$ such that $(1_b),(2_b)$ and $(3_b)$ all hold.
     But, unraveling the definitions, recalling that the maps $\flat_\d$ are bijections for $0<\d\in\rz\bbr$ and standard motions if $\d\in\bbr$, we have that $(1_b)$ says that
\begin{align}
     \rz f(x+b\rz\!B_d)= \rz f(x)+b\rz\!f^b_x(\rz B_d)\notag \\
     \supset \rz f(x)+\rz\!B_{bd/2}\;\dot=\rz V,
\end{align}
     and use reverse transfer. $(2_b)$ implies that $f$ is $1-1$ on $V$ and the estimate $(3_b)$  implies that the standard map $(f^b_x)^{-1}$ is continuous on $V$.
     Finally, if $y=f(x)$,  lemma \ref{lem: onto result for pf of IFT} gives $(f^{-1})^\d_y$ is almost linear with $^o((f^{-1})^\d_y)$ the identity for all $\d\in\mu(0)_+$, so that corollary \ref{cor: f bar is S continuous} implies that $f^{-1}$ is differentiable at $y$.

\end{proof}
\begin{remark}
     Note that  beyond the use of overflow, this argument is a gathering of simple infinitesimal information already exposed.
     In particular, a systematic use of the elementary machinery associated with the magnification spaces, eg., diagram \ref{diagram: df ladder} and lemma \ref{lem: f unif diff imply f^d_x is unif diff}, allows one to prove differentiability of $f^{-1}$ {\it without calculations}.   Further, all other proofs of this result (except the nonstandard infinitesimal recursion proof of Cutland and Hanqiao, \cite{CutlandIFT1993}, and their proof is of the continuously differentiable implicit function theorem)  use careful estimates and rely on a version of some contraction mapping theorem or an argument borrowed from its proof, especially when proving that the map is open. Our proof used crude order of magnitude arguments and a basic functorial machinery associated with magnification spaces.  The magnification spaces and overflow make these much simpler arguments effective. Part of the purpose of this paper is to expose the possibilities of this approach.
     Also critical to this process is the two fold use of infinitesimal scaling. First, we blow up an infinitesimal scale to work within the given $\SQ^\d_x$, but then we use the infinitesimal scaling within the given fixed $\SQ^\d_x$ to establish the regularity properties there. This approach will be used with a bit more effort in the following section for the proof of the second order (singular) inverse mapping theorem although the machinery there is not as polished as the first order material in this section.
\end{remark}

\section{A singular second order inverse mapping theorem}\label{sec: 2nd order IFT}
\subsection{Perspective}
    In the previous subsection, we used the following critical facts to give a transparent proof of the inverse mapping theorem. First: when viewing our mapping $f$ as a mapping $f^\d_x$ on $\SQ^d_x$, we found that the term following the linear term was infinitely smaller than the linear term on all of $\SQ^\d_x$, so that any regularity carried by the linear term was not disturbed by the remainder term; hence easy proofs that $f^\d_x$ had the regularity properties of the linear term. Second: this construction was independent of $\d$, eg., holding for all positive infinitesimal $\d$, and for each such $\d$, the statement of this regularity is an internal statement. This means that the set of $\d$'s where the regularity statement holds is internal and contains the set of positive infinitesimals. Unwinding the consequences of overflow then finishes the proof of the standard inverse mapping theorem.

    For the inverse mapping theorem, regularity of the (first nontrivial) linear term means  linear isomorphism and so the inverse mapping theorem says that this regularity property of the linear term determines the regularity of our map $f$ at $x$; ie.,  $f$ is ``(locally) like a linear isomorphism'', ie. a diffeomorphism. But if the first order term vanishes, what is regularity with respect to the second order term? The best we can hope for the quadratic term is an appropriately differentiable homeomorphism near our point. Of course, our uniform differentiability hypothesis above  makes no assumptions  about regularity away from our point $x$ and similarly the assumption below, differentiability to order 2 at $x$ makes no such assumptions. So the best we can hope for our map $f$ (now assumed to be differentiable to degree 2, see definition \ref{def: unif differentiable to degree k} below) is a (differentiable) homeomorphism from some neighborhood of $x$ to a neighborhood of $f(x)$. This is what we will prove below.
    Note that in the corollary following our theorem the results is placed in the context of map genericity, see \cite{GolubitskyGuillemin1973}.

    Historically, there have been a variety of inverse mapping theorems (and implicit mapping theorems, which we don't discuss; again see Krantz and Parks, \cite{KrantzParks1992}). Recent work that greatly generalize the classical continuously differentiable theorem  mostly  follow the insights of Clarke, \cite{Clarke1976}, generally evolving out of the notion of the generalized Jacobian. As with the notion of generalized Jacobian, all of these approaches relate the approximate linear behavior (by generic linear maps) of a mapping at the germ of a point in its domain; see eg., Jetakumar and Luc, \cite{UnbddGenerzJacobians-Jeyakumar-Luc2002}, for recent work along this line. (A notable exception is the work of Frankowska, \cite{FrankowskaInvMapThmss1990} and her coworkers.) Our approach (as in the previous section) instead makes negligible the behaviors of higher order parts with respect to  lower order parts {\it on infinitesimal scales}. Hence, their approach, although quite powerful, is limited to linearization perspectives. Our second result gives a quadratic regularity result that does not seem to be proveable  within the aegis of their machinery. (Note that the significant work of Fukui, Kurdyka and Paunescu, \cite{FukuiTameIFT2010}, works within the framework of tame mappings, eg., their conditions contain restrictions for a neighborhood of a point.) In the conclusion we indicate how the approach here is only an introduction to a quite general nonstandard approach to such questions.
\subsection{A singular second order inverse mapping theorem}
    A useful setting for our singular inverse mapping theorems is the set of differentiable maps of order $k$ at a given point $x\in \bbr^n$. Obviously differentiability  of order 1 at a point is distinctively weaker than uniform differentiability. The fact that we will be using differentiability of degree greater than 1  gives us a uniform smoothness at a point analogous to uniform differentiability. Yet, as noted above, we will be assuming that the derivative is as singular (as a linear map) as possible at $x$, ie., assuming that the derivative vanishes.

    We will give a (nonstandard) infinitesimal definition as it will be sufficient for our purposes. Here, if $v\in\bbr^n$, and $j\in\bbn$, $v^{\odot j}$ will denote the $j^{th}$ symmetric power of $v$, ie., $v\odot\cdots\odot v\in\odot^j\bbr^n$. (Our reference for the multilinear algebra is the text of Greub, \cite{GreubMultilinear1978}.) If we have a positive definite norm $|v|$ on $\bbr^n$, then one can check that this norm induces a norm  on $\odot^2\bbr^n$ via the definition $|v\odot w|=|v||w|$. We will use the transfer of these structures without mention and as in the following definition will often leave off *'s.
\begin{definition}\label{def: unif differentiable to degree k}
    If $f:\bbr^n\ra\bbr^n$ and $k\in\bbn$, we say that $f$ is differentiable to degree $k$ at $x$ if there is $L\in End(\bbr^n)$ and $S^j\in Hom(\odot^j\bbr^n,\bbr^n)$ for $j=2,\cdots,k$, such that if $\d\in\mu(0)_+$ and $v\in\rz\bbr^n$ with $|v|=1$, we have
\begin{align}
    f(x+\d v)=f(x)+\d L(v)+\d^2 S^2(v^{\circ 2})+\cdots+\d^k S^k(v^{\circ k})+\Fr^\d_k(v)\\
    \text{where}\quad \Fr_k^\d(v)=o(\d^k)\qquad\qquad\qquad\qquad\qquad\qquad\notag
\end{align}
    Below we will often use the notation $d^2f_x$ for the the quadratic term $S^2$, calling it the quadratic differential.
\end{definition}
     Note that we have absorbed the usual factorials into the definitions of the symmetric multilinear maps $S^j$.
\begin{lemma}\label{lem: remainder term is *continuous}
    If $f:(\bbr^n,x)\ra\bbr^n$ is differentiable to degree $k$, $k\in\bbn$, at $x$, with expansion given above, then the remainder term $\Fr_k^\d$ is *continuous at $x$.
\end{lemma}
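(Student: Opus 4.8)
The plan is to recognise that the remainder $\Fr_k^\d$, after a harmless reparametrisation, is the transfer of a single standard map vanishing at $x$; $*$-continuity of $\Fr_k^\d$ at $x$ then reduces to ordinary continuity of $f$ at $x$, which the degree-$k$ expansion forces.

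First I would introduce the standard map
\begin{align}
  g(\xi)\;\dot=\;f(\xi)-f(x)-L(\xi-x)-\sum_{j=2}^k S^j\bigl((\xi-x)^{\odot j}\bigr),\qquad \xi\in\bbr^n ,
\end{align}
which has $g(x)=0$. Writing a point near $x$ as $\xi=x+\d v$ with $\d=|\xi-x|$ and $|v|=1$, and using $\rz L(\d v)=\d\,\rz L(v)$ and $\rz S^j((\d v)^{\odot j})=\d^j\,\rz S^j(v^{\odot j})$ (homogeneity of the symmetric powers), the remainders appearing in Definition \ref{def: unif differentiable to degree k} satisfy $\Fr_k^\d(v)=\rz g(x+\d v)$; that is, regarded as a function of the point $\xi$, $\Fr_k^\d$ is simply $\rz g$. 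Since $g$ differs from $f$ only by the constant $f(x)$ and a polynomial vanishing at $x$ — both continuous everywhere — $g$ is continuous at $x$ if and only if $f$ is; so it is enough to show that $f$ is continuous at $x$, for then $\Fr_k^\d=\rz g$ is the transfer of a map continuous at $x$, i.e.\ is $*$-continuous at $x$.

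Next I would verify that continuity at $x$ follows from the expansion itself. Let $\xi\sim x$; if $\xi=x$ there is nothing to check, so put $\d\;\dot=\;|\xi-x|\in\mu(0)_+$ and $v\;\dot=\;(\xi-x)/\d$, giving $|v|=1$ and
\begin{align}
  \rz f(\xi)-\rz f(x)\;=\;\d\,\rz L(v)+\sum_{j=2}^k \d^j\,\rz S^j(v^{\odot j})+\Fr_k^\d(v) .
\end{align}
Because $L$ and each $S^j$ are standard and $v$ is a $*$-unit vector, $\rz L(v)$ and the $\rz S^j(v^{\odot j})$ are finite, so each summand $\d\,\rz L(v)$, $\d^j\,\rz S^j(v^{\odot j})$ is infinitesimal; and $\Fr_k^\d(v)=o(\d^k)$ means $\Fr_k^\d(v)/\d^k\sim 0$, whence $\Fr_k^\d(v)\sim 0$. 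Hence $\rz f(\xi)\sim \rz f(x)$; since this holds for every $\xi\sim x$ and $x$ is standard, the nonstandard criterion for continuity at a point yields that $f$ is continuous at $x$, and the first step then finishes the argument.

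I do not expect a genuine obstacle here: the proof is at bottom the observation that subtracting from $f$ a constant and an everywhere-continuous polynomial cannot disturb continuity at the single point $x$. The only real care is interpretive — the superscript $\d$ in ``$\Fr_k^\d$'' is an artifact of expanding along rays and disappears once homogeneity of the polynomial terms is invoked, leaving the single standard map $g$, and ``$o(\d^k)$'' must be read as the assertion that $\Fr_k^\d(v)/\d^k$ is infinitesimal, which is exactly what makes $\Fr_k^\d(v)$ itself infinitesimal. Once those two points are settled, the two steps above are routine.
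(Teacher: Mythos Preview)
Your proof is correct and follows essentially the same route as the paper's: both observe that the remainder is (the transfer of) $f$ minus a polynomial, that differentiability forces continuity of $f$ at $x$, and that subtracting an everywhere-continuous polynomial preserves $*$-continuity at $x$. You have simply been more explicit than the paper in naming the standard map $g$ and in verifying continuity of $f$ at $x$ via the nonstandard criterion, whereas the paper dispatches both points in a sentence.
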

\begin{proof}
    This is straightforward: it's easy to see that $f$ is differentiable implies that $f$ is continuous at $x$ and so  $\rz f$ is *continuous at $\rz x$. But the $S^j$'s are in fact analytic and so their *dilations are *analytic and so *continuous.
\end{proof}

    We begin with a definition of the geometric regularity condition our quadratic parts need to satisfy. Following the theorem, we show how this condition has a natural genericity expression in the sense of \cite{GolubitskyGuillemin1973}.
\begin{definition}
    Let $a,\ov{d},c\in\bbr_+$ be arbitrary and fixed. Suppose that $\SH:\rz\bbr^n\ra\rz\bbr^n$ is an internal map. Then we say that $\SH$ is $(c,\ov{d})$ quadratically regular at $0$ for $\d\in\mu(0)_+$ if $\SH$ is *continuous with $\SH(\rz\!B_d(\xi))\supset\d\rz\!B_{cd^2}(\SH(\xi))$ for all $d\in\bbr_+$ with $d<\ov{d}$ and $|\xi|<\rz a$
\end{definition}
     The following lemma will play the same role in the proof of the higher order  singular inverse function theorem that Lemma \ref{lem: onto result for pf of IFT} played in the proof  the inverse function theorem. Note that it appears that the  needed result (for this quadratic case) for a standard proof that should correspond to the contraction theorem argument for the regular inverse mapping theorem does not seem to exist for this singular quadratic situation. Unlike the method used here which is used for both the inverse mapping theorem of the previous section and the the singular theorem presented here; it seems a very different procedure will be needed for a standard proof of this singular case.
     Nevertheless, we believe that our proof of the following lemma is more complicated than it needs to be.
\begin{lemma}\label{lem: H quad homeo implies H+o(H) also}
    Let $a,\ov{d},c\in\bbr$ be positive numbers and $\d\in\mu(0)_+$. Suppose that $\SH:\rz\bbr^n\ra\rz\bbr^n$ is internal, *continuous, injective and $(c,\ov{d})$ quadratically regular for $\d$ at $0$.
    Suppose that $\Ff:\rz\bbr^n\ra\rz\bbr^n$ is internal and satisfies $\Ff(\xi)-\Ff(\z)= H(\xi)-H(\z)+\nu(H(\xi)-H(\z))$ where $\nu$ is internal, *continuous and satisfies $\nu(\la)=o(|\la|)$. Given this, if $0<d\in\bbr$ with $d<\ov{a}$, then we have that $\Ff|\rz B_d$ is one to one and $\Ff(\rz B_d)\supset\d\rz B_{cd^2/2}$.
\end{lemma}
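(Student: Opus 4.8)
The plan is to reduce everything to Lemma~\ref{lem: onto result for pf of IFT} by noticing that the hypothesis on $\Ff$ is \emph{exactly} the statement that $\Ff\circ H^{-1}$ has the almost-linear difference structure treated there, and by rescaling by $\f1\d$ first so that the balls in play have standard radius.

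First the reductions. Since the hypothesis constrains $\Ff$ only through differences $\Ff(\xi)-\Ff(\z)$, we may replace $\Ff$ by $\Ff-\Ff(0)$, and so assume $\Ff(0)=0$ (in general the conclusion becomes $\Ff(\rz B_d)\spq\Ff(0)+\d\rz B_{cd^2/2}$). Next put $\Psi=\f1\d\Ff$, $\tl H=\f1\d H$ and $\tl\nu(\mu)=\f1\d\nu(\d\mu)$. A substitution gives $\Psi(\xi)-\Psi(\z)=\tl H(\xi)-\tl H(\z)+\tl\nu(\tl H(\xi)-\tl H(\z))$; $\tl H$ is internal, *continuous and injective; and $\tl\nu$ is internal and *continuous with $\tl\nu(\mu)=o(|\mu|)$, which — since $\d\mu$ is infinitesimal as long as $|\mu|$ stays bounded — is in fact the genuine estimate $|\tl\nu(\mu)|\ll|\mu|$ on every ball $\rz B_R$ of standard radius. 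Finally, the $(c,\ov d)$-quadratic regularity of $H$ for $\d$, read at $\xi=0$ and divided through by $\d$, gives $\tl H(\rz B_d)\spq\rz B_{cd^2}(\eta_0)$ where $\eta_0:=\tl H(0)$ — a ball of \emph{standard} radius $cd^2$ (this uses $d<\ov d$). It remains to show that $\Psi|\rz B_d$ is injective and $\Psi(\rz B_d)\spq\rz B_{cd^2/2}$.

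Injectivity is immediate: if $\Psi(\xi)=\Psi(\z)$ then $\mu+\tl\nu(\mu)=0$ with $\mu=\tl H(\xi)-\tl H(\z)$, and $\tl\nu(\mu)=o(|\mu|)$ precludes $|\tl\nu(\mu)|=|\mu|$ unless $\mu=0$; hence $\tl H(\xi)=\tl H(\z)$ and $\xi=\z$ by injectivity of $\tl H$. For the covering statement, the displayed relation says precisely that the internal map $G:=\Psi\circ\tl H^{-1}$, defined at least on $\tl H(\rz B_d)\spq\rz B_{cd^2}(\eta_0)$ (using injectivity of $\tl H$), satisfies $G(\mu)-G(\mu')=(\mu-\mu')+\tl\nu(\mu-\mu')$; that is, $G$ is of exactly the form treated in Lemma~\ref{lem: onto result for pf of IFT}, with linear part the identity and error term $\ll$ the identity. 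Translating to the origin, $\wh G(x):=G(x+\eta_0)-G(\eta_0)$ is internal and *continuous on $\rz B_{cd^2}$ with $\wh G(0)=0$ and $\wh G(x)-\wh G(y)=(x-y)+\tl\nu(x-y)$, so Lemma~\ref{lem: onto result for pf of IFT} applied with the \emph{standard} radius $cd^2\in\bbr_+$ yields $\wh G(\rz B_{cd^2})\spq\rz B_{cd^2/2}$. Since $G(\eta_0)=\Psi(0)=0$, this reads $G(\rz B_{cd^2}(\eta_0))\spq\rz B_{cd^2/2}$, whence $\Psi(\rz B_d)=G(\tl H(\rz B_d))\spq G(\rz B_{cd^2}(\eta_0))\spq\rz B_{cd^2/2}$. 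Multiplying through by $\d$ gives $\Ff(\rz B_d)\spq\d\rz B_{cd^2/2}$.

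I expect the main obstacle to be conceptual rather than computational: one has to notice that the hypothesis $\Ff(\xi)-\Ff(\z)=H(\xi)-H(\z)+\nu(H(\xi)-H(\z))$ is nothing but the assertion that $\Ff\circ H^{-1}$ has the almost-linear difference structure of Lemma~\ref{lem: onto result for pf of IFT}, so that the only genuinely new input is the quadratic regularity of $H$, used solely to exhibit a standard-size ball $\rz B_{cd^2}(\eta_0)$ inside $\tl H(\rz B_d)$. The one technical point needing care is that, \emph{before} rescaling, the ball on which one wants to invoke Lemma~\ref{lem: onto result for pf of IFT} has infinitesimal radius $\d cd^2$, outside that lemma's stated range $d\in\bbr_+$; dividing the whole configuration by $\d$ at the outset simultaneously restores a standard radius and turns the pointwise condition $\nu(\la)=o(|\la|)$ into the uniform bound $|\tl\nu|\ll|\cdot|$ on a standard ball that the lemma requires. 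Running the *compactness/density argument of Lemma~\ref{lem: onto result for pf of IFT} directly at the infinitesimal scale is also possible, but would amount to reproving it; the rescaling lets us cite it as is.
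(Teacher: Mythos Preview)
Your argument is correct and is in fact cleaner than the paper's. The paper does not reduce to Lemma~\ref{lem: onto result for pf of IFT}; instead it reruns the *density/*compactness argument from scratch inside the quadratic setting: assuming $\d\rz B_{cd^2/2}\cap\Ff(\rz B_d)$ is not *dense, it picks $\Fw_0$ at positive *distance $\rho_0$ from the image, locates a nearest image point $\Ff(\Fx_0)$, argues that $\Fx_0$ lies in the *interior of $\rz B_d$, and then invokes $(c,\ov d)$-quadratic regularity \emph{at the variable point} $\Fx_0$ (not at $0$) to push a nearby image point strictly closer to $\Fw_0$, contradicting minimality of $\rho_0$. Your route bypasses all of this by observing that the difference relation says precisely that $G=\Psi\circ\tl H^{-1}$ is of the Lemma~\ref{lem: onto result for pf of IFT} form; after the $1/\d$ rescaling the relevant ball has standard radius $cd^2$ and that lemma applies verbatim. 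Two incidental gains of your approach: you use quadratic regularity only at $\xi=0$, whereas the paper needs it at the a priori unknown interior point $\Fx_0$; and you avoid the somewhat delicate step of showing $\Fx_0$ is interior. The only extra ingredient you need that the paper does not is that $\tl H^{-1}$ is *continuous on $\tl H(\rz B_d)$, which is immediate by transfer of the compact--Hausdorff homeomorphism fact since $H$ is *continuous and injective on the *compact $\rz B_d$. (The author in fact remarks just before the lemma that the paper's proof is ``more complicated than it needs to be''; your reduction is a natural simplification.) One cosmetic point: Lemma~\ref{lem: onto result for pf of IFT} is stated for maps on all of $\rz\bbr^n$, while your $\wh G$ is only defined on $\rz B_{cd^2}$; this is harmless since the proof of that lemma uses only values on the closed ball, but you may want to note this or extend $\wh G$ by transferred Tietze.
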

\begin{proof}
    Let's first verify the surjection assertion. As $\Ff$ is *continuous and $\rz B_d$ is *closed, it suffices to show that $\d\rz B_{cd^2/2}\cap\Ff(\rz B_d)$ is *dense in $\d\rz B_{cd^2/2}$. If not, then, there is $\Fw_0\in\rz B_{c\d^2/2}$ and  $0<\Fr_0\in\rz\bbr$  such that $\rz dist(\Ff(\rz B_d),\Fw_0)=\rho_0$, and so by *continuity of $\Ff$ and as everything is *closed, there is $\Fx_0\in\rz B_d$ with $\rz dist(\Ff(\Fx_0),\Fw_0)=\rho_0$. First note that $|\rho_0|=o(\d d^2)$ as $H(\rz B_d)\supset\d\rz B_{cd^2}$ and $\Ff(\xi)=H(\xi)+o(H(\xi))$. But this implies that $\Fx_0\in\rz Int(\rz B_d)$, the *interior of $\rz B_d$. Otherwise, $|\Ff(\Fx_0)|\geq |H(\Fx_0)|-o(H(\Fx_0))\geq \f{4}{5}\d cd^2$ (say) which would prevent $\rz dist (\Ff(\Fx_0),\Fw_0)$ to be $o(\d d^2)$.
    Therefore, for some $0<\e\sim 0$, we have $\rz B_\e(\Fx_0)\subset \rz B_d$. Now, by hypothesis, $\d\rz B_{cd^2}(0)\subset H(\rz B_\e(\Fx_0))-H(\Fx_0)$ and so
\begin{align}
     (\diamondsuit)\quad  \d\rz B_{cd^2}(\Ff(\Fx_0))\subset\Ff(\Fx_0)+H(\rz B_\e(\Fx_0))-H(\Fx_0).\notag
\end{align}
    But there is $\ov{\Fz}\in\d\rz B_{cd^2}(\Ff(\Fx_0))$ such that, for example
\begin{align}
    \rz dist(\ov{\Fz},\Fw_0)<\rho_0-\f{\d cd^2}{3}.
\end{align}
     And by $(\diamondsuit)$ there is a $\ov{\Fy}\in\rz B_\e(\Fx_0)$ such that $\Ff(\Fx_0)+H(\ov{\Fy})-H(\Fx_0)=\ov{\Fz}$ and so
\begin{align}
    \rz dist(\Ff(\Fx_0)+H(\ov{\Fy})-H(\Fx_0),\Fw_0)+\f{\d cd^2}{3}<\rz dist(\Ff(\Fx_0),\Fw_0).
\end{align}
    Now $\nu(H(\ov{\Fy})-H(\Fx_0))=o(\d cd^2)$ and so  $\d cd^2/3-|\nu(H(\ov{\Fy})-H(\Fx_0))|>c\e^2/4$;
    and so by the hypothesis on $\Ff$
\begin{align}
    \rz dist(\Ff(\ov{\Fy}),\Fw_0)=\rz dist(\Ff(\Fx_0)+H(\ov{\Fy})-H(\Fx_0)+\nu(H(\ov{\Fy})-H(\Fx_0)),\Fw_0) \notag\\
    <\rho_0-(\f{\d cd^2}{3}-|\nu(H(\ov{\Fy})-H(\Fx_0))|)<\rho_0-\f{\d cd^2}{4},\qquad
\end{align}
    contradicting $\rz dist(\Ff(\rz B_d),\Fw_0)=\rho_0$.
    Finally note that its clear that $\Ff$ is one to one as $\xi\not=\z$ implies that $H(\xi)-H(\z)\not=0$ and $\nu(H(\xi)-H(\z))=o(H(\xi)-H(\z))$.
\end{proof}
    We now have the preliminaries required to prove the next result.
\begin{theorem}\label{thm: quad reg -> homeomorph}
    Suppose that $f:(\bbr^n,x)\ra\bbr^n$ is differentiable to order 2 at $x$ and that $df_x=0$. Suppose that there are $c,d\in\bbr_+$ such that for $\d\in\mu(0)_+$ and $\rz\!B_d=\{\xi\in Q^\d_x:|\xi|<d\}$, we have that $\xi\in\rz\!B_d\mapsto\d d^2f_x(\xi,\xi)$ is $(c,d)$ quadratically regular for $\d$.
    Then $f$ is a homeomorphism from a neighborhood of $x$ onto a neighborhood of $f(x)$.
\end{theorem}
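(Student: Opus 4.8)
The plan is to transplant the overflow strategy of Theorem~\ref{thm: inverse fcn thm} to the second-order singular situation, with Lemma~\ref{lem: H quad homeo implies H+o(H) also} playing the role that Lemma~\ref{lem: onto result for pf of IFT} played there. First I would pass to the magnification spaces: for each $\d\in\mu(0)_+$ form the internal map $f^\d_x\colon Q^\d_x\ra Q^\d_{f(x)}$ determined by $f(x+\d\xi)=f(x)+\d f^\d_x(\xi)$. Since $df_x=0$, writing $\xi=rv$ with $r=|\xi|$, $|v|=1$, applying Definition~\ref{def: unif differentiable to degree k} with the positive infinitesimal $\d r$, and dividing by $\d$ gives, for all $\xi\in Q^\d_x$,
\begin{align}
 f^\d_x(\xi)=\d\,d^2f_x(\xi,\xi)+\mu_\d(\xi),\qquad \mu_\d(\xi)=o(\d|\xi|^2),\notag
\end{align}
with $\mu_\d$ internal and, by Lemma~\ref{lem: remainder term is *continuous}, *continuous. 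Setting $\SH(\xi)=\d\,d^2f_x(\xi,\xi)$, this is exactly the decomposition $f^\d_x(\xi)-f^\d_x(\z)=\bigl(\SH(\xi)-\SH(\z)\bigr)+\nu\bigl(\SH(\xi)-\SH(\z)\bigr)$ with $\nu$ *continuous and $\nu(\la)=o(|\la|)$ that Lemma~\ref{lem: H quad homeo implies H+o(H) also} requires.

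Next I would invoke the hypothesis that $\SH$ is $(c,d)$ quadratically regular for $\d$, together with the internality, *continuity and injectivity of $\SH$ on the ball in play, and apply Lemma~\ref{lem: H quad homeo implies H+o(H) also}. This produces, for each fixed standard $\rho<d$, the assertions
\begin{align}
 (1_\d)&:\ f^\d_x(\rz B_\rho)\supset\d\rz B_{c\rho^2/2};\notag\\
 (2_\d)&:\ f^\d_x\ \text{is injective on }\rz B_\rho;\notag\\
 (3_\d)&:\ \text{a scale estimate rendering }(f^\d_x)^{-1}\text{ S-continuous on }\d\rz B_{c\rho^2/2},\notag
\end{align}
the last read off from the scale-surjectivity of $\SH$ at every base point, exactly as inside the proof of Lemma~\ref{lem: H quad homeo implies H+o(H) also}. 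Each of $(1_\d),(2_\d),(3_\d)$ is internal in $\d$ and holds for all $\d\in\mu(0)_+$, so by overflow there is a standard $b>0$ for which $(1_b),(2_b),(3_b)$ all hold. Then, unravelling as in the proof of Theorem~\ref{thm: inverse fcn thm}: since $\flat_x$ and $\flat_{f(x)}$ are standard dilations once $b$ is standard, $(1_b)$ transcribes to $\rz f(x+b\rz B_\rho)=\rz f(x)+b\,\rz f^b_x(\rz B_\rho)\supset\rz f(x)+\rz B_{b^2c\rho^2/2}$, so reverse transfer says $f$ carries the standard neighborhood $B_{b\rho}(x)$ onto a set containing the standard neighborhood $V\;\dot=\;B_{b^2c\rho^2/2}(f(x))$; $(2_b)$ makes $f$ injective on $B_{b\rho}(x)$ and $(3_b)$ makes $f^{-1}\colon V\ra\bbr^n$ continuous. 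With the continuity of $f$ near $x$ this gives the claimed homeomorphism onto a neighborhood of $f(x)$ (alternatively one may finish via invariance of domain once injectivity and continuity of the standard $f$ on the open ball are in hand).

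The main obstacle is the middle step: showing that $f^\d_x$ genuinely verifies the hypotheses of Lemma~\ref{lem: H quad homeo implies H+o(H) also}. Two points need care. First, Lemma~\ref{lem: remainder term is *continuous} controls the degree-$2$ remainder at the center of the magnification space, while Lemma~\ref{lem: H quad homeo implies H+o(H) also} needs $\mu_\d$ (equivalently $\nu$) to be *continuous and of order $o(|\,\cdot\,|)$ relative to the quadratic term throughout $\rz B_\rho$; passing from $\Fr_2^\d(v)=o(\d^2)$ for unit $v$ to the uniform bound $\mu_\d(\xi)=o(\d|\xi|^2)$ over all of $\rz B_\rho$ is the real content there. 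Second, one must secure the injectivity of $\SH=\d\,d^2f_x(\cdot,\cdot)$ on the relevant ball that Lemma~\ref{lem: H quad homeo implies H+o(H) also} demands, and it is precisely here that the quadratic regularity hypothesis must be pressed into service. Once those are settled, the remainder is the now-routine ``internal statement $+$ overflow $+$ reverse transfer'' mechanism already run for Theorem~\ref{thm: inverse fcn thm}.
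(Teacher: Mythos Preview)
Your proposal is correct and follows essentially the same route as the paper: derive the difference formula for $f^\d_x$ from second-order differentiability with $df_x=0$, feed $\SH(\xi)=\d\,d^2f_x(\xi,\xi)$ and $\Ff=f^\d_x$ into Lemma~\ref{lem: H quad homeo implies H+o(H) also}, then overflow on the resulting internal statement and reverse-transfer. The paper packages your $(1_\d),(2_\d),(3_\d)$ into the single internal assertion that $f^\d_x$ is a *homeomorphism from $\rz B_d$ onto a set containing $\d\rz B_{cd^2}$, and the two technical pressure points you flag (the form of the remainder and the injectivity of $\SH$) are exactly the ones the paper leans on the hypotheses and Lemma~\ref{lem: remainder term is *continuous} to handle.
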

\begin{proof}
    Suppose that $0<\d\sim 0$ is arbitrary and let $d\in\bbr$ be positive to be determined shortly. Now $f$ is uniformly differentiable to second order at $x$ means that we have for $\xi\in\rz\bbr^n$ with $|\xi|\leq d$ that $f(x+\d\xi)=f(x)+\d df_x(\xi)+\d^2d^2f_x(\xi,\xi)+o(\d^2d^2f_x(\xi,\xi))$ and so writing a similar expression for $f(x+\d\z)$ for $|\z|\leq d$ also; and as $df_x=0$, we get with some manipulation that
\begin{align}\label{eqn: f^d_x difference formula in d^2f}
    f^\d_x(\xi)-f^\d_x(\z)=\d(d^2f_x(\xi,\xi)-d^2f_x(\z,\z))+\nu(\d(d^2f_x(\xi,\xi)-d^2f_x(\z,\z)))
\end{align}
     where by the hypothesis $\nu(\la)=o(|\la|)$ for $\la\in\SQ^\d_{f(x)}$. Here we have written $\xi,\z\in \rz B_d(0)\subset Q^\d_x$ (the *transfer of the identification of $ T_x\bbr^n$ with the corresponding vectors in $\bbr^n$ being implicit).
    Now Lemma \ref{lem: remainder term is *continuous} implies that the remainder `$\nu$' term is *continuous.
    The hypothesis says that, for the given $c,d\in\bbr_+$,  $\xi\mapsto \SH(\xi)=\d d^2f_x(\xi,\xi):\SQ^\d_x\ra\SQ^\d_{f(x)}$ satisfies the hypothesis of Lemma \ref{lem: H quad homeo implies H+o(H) also} for a given arbitrary $\d\in\mu(0)_+$.
    So we can apply lemma \ref{lem: H quad homeo implies H+o(H) also}, where  $\Ff(\xi)$ in that lemma is  $f^\d_x(\xi)$ satisfying expression \ref{eqn: f^d_x difference formula in d^2f} above.  That is, for our given $c,d>0$ in $\bbr_+$, we have a  *neighborhood $U^\d$ of $0$ in $\SQ^\d_{f(x)}$ such that the following holds:
\begin{align}
    \quad f^\d_x:\rz B^\d_d(0)\twoheadrightarrow U^\d\supset\d\rz B^{\d}_{cd^2}\;\text{is a *homeomorphism onto}\;U^\d \tag{$expr_\delta$}\label{eqn: eqn delta}
\end{align}
    Now \ref{eqn: eqn delta} is an internal statement for each fixed $\d$ and so the set $\FI=\{\d\in\rz\bbr_+:expr_\d\;\text{holds}\}$ is an internal set and by the above argument we know that $\mu_+(0)\subset\FI$, and so eg., $\FI$ contains a standard positive number $a$. That is, we have that $f^a_x$  maps $\rz B^a_d$ *homeomorphically (and hence homeomorphically as $f^a_x=\rz f^a_x$ is standard when $a,x$ is standard) onto a neighborhood $U^a\supset a\rz B^a_{cd^2}$
\end{proof}


\subsection{Relationship with transversality to Segre variety}
    It turns out that our quadratic regularity condition on $d^2f_x$ can be expressed as a genericity condition (in the spirit of \cite{GolubitskyGuillemin1973}) with respect to the Segre variety. This interpretation will be developed here.
    We begin with lemmas that situate the vanishing of the
    differential in the framework of the magnification spaces.
    Now $d^2f_x$ is a symmetric bilinear map and is used above in the form $v\mapsto d^2f_x(v,v)$.
    Hence, it is natural to consider a factorization of this operator via the following.
\begin{definition}
    Let $p_2:V\ra V^{\odot 2}$ denote the map $v\mapsto v^{\odot 2}$. Let $\SS=\SS_2\subset V^{\odot 2}$ denote the image of $p_2$.
\end{definition}
    The map $p_2$ will play the role as a universal factor for our singular maps that are second order regular. Note that $dp_{2,0}:T_0V\ra T_0V^{\odot 2}$ is the zero map, but we have the following statement, of which all but the last part is commonly known, see Greub, \cite{GreubMultilinear1978}.
\begin{lemma}\label{lem:p^2 is quad homeo}
     $\SS_2$ is a smooth $n$ dimensional submanifold of $V^{\odot 2}$ and  $p_2$ is a homeomorphism of $V$ onto $\SS_2$ that is a diffeomorphism away from $0$. Specifically, if $0<d\leq 1$, then there is  $0<r,c\in\bbr$, with $r<d$, such that if $B_r(v)\subset B_d(0)$, then $p_2(B_r(v))\supset B_{cr^2}(p_2(v))$.
\end{lemma}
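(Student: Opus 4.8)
The plan is to work in the matrix model of $V^{\odot 2}$: identify $V=\bbr^{n}$ with column vectors and $V^{\odot 2}$ with the symmetric $n\times n$ matrices via $v\odot w\mapsto\tfrac12(vw^{T}+wv^{T})$, so that $p_{2}$ becomes $v\mapsto vv^{T}$, the image $\SS_{2}=\{vv^{T}:v\in\bbr^{n}\}$ is the cone of symmetric positive semidefinite matrices of rank at most $1$, and in the Frobenius norm $|v^{\odot 2}|=|v|^{2}$ (the statement's normalization) with $\tfrac1{\sqrt2}|a||b|\le|a\odot b|\le|a||b|$ on decomposable elements. Two identities are used throughout: $dp_{2,v}(w)=2\,v\odot w$ and $p_{2}(w)-p_{2}(w')=(w+w')\odot(w-w')$. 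The differentiable part: from the first identity, if $v\ne 0$ and $v\odot w=0$ then $vw^{T}=-wv^{T}$, which forces $w$ parallel to $v$ and then $w=0$, so $p_{2}$ is an immersion at every $v\ne 0$. That $\SS_{2}\setminus\{0\}$ is an embedded $n$-dimensional submanifold of $V^{\odot 2}\setminus\{0\}$ is cleanest via the $GL_{n}(\bbr)$-action $g\cdot M=gMg^{T}$: $\SS_{2}\setminus\{0\}$ is exactly the orbit of $e_{1}e_{1}^{T}$, whose stabilizer $\{g:ge_{1}=\pm e_{1}\}$ has dimension $n^{2}-n$, and the orbit is locally closed in $V^{\odot 2}$ (the difference of the closed sets ``symmetric, positive semidefinite, rank $\le 1$'' and $\{0\}$), hence an embedded submanifold of dimension $n$. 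Since $p_{2}(w)=p_{2}(w')$ holds exactly when $w'=\pm w$, and $|p_{2}(v)|=|v|^{2}$ makes $p_{2}$ proper, $p_{2}$ is a closed continuous surjection onto $\SS_{2}$; it restricts to a two-sheeted covering of $\SS_{2}\setminus\{0\}$ by $V\setminus\{0\}$ (in particular a local diffeomorphism there), and it descends to a homeomorphism of $V/\{\pm 1\}$ onto $\SS_{2}$ --- which is the correct reading of ``$p_{2}$ is a homeomorphism onto $\SS_{2}$'', since $p_{2}$ itself is two-to-one.

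For the quantitative assertion --- where the balls $B_{cr^{2}}(p_{2}(v))$ are read inside $\SS_{2}$, i.e.\ as ambient balls intersected with $\SS_{2}$ --- the key observation is that $p_{2}(w)-p_{2}(u)=(w+u)\odot(w-u)$ together with the parallelogram law $|w-u|^{2}+|w+u|^{2}=2|w|^{2}+2|u|^{2}$ gives, for $|u|=1$, $\min(|w-u|,|w+u|)\le\sqrt 2\,|p_{2}(w)-p_{2}(u)|$; since $p_{2}(w)=p_{2}(-w)$, this yields the \emph{elementary} surjection $p_{2}(B_{\rho}(u))\supset B^{\SS_{2}}_{\rho/\sqrt2}(p_{2}(u))$ for all $|u|=1$ and all $\rho>0$ --- no inverse-function-theorem, compactness, or degree argument needed. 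Rescaling through $p_{2}(\lambda v)=\lambda^{2}p_{2}(v)$ and $\lambda^{2}\SS_{2}=\SS_{2}$ gives $p_{2}(B_{\rho}(v))\supset B^{\SS_{2}}_{|v|\rho/\sqrt2}(p_{2}(v))$ for every $v\ne 0$ and $\rho>0$, and one also has the exact identity $p_{2}(B_{\rho}(0))=\{M\in\SS_{2}:|M|<\rho^{2}\}$. Given $d\in(0,1]$ one then takes any $r\in(0,d)$ and, for $v$ with $B_{r}(v)\subset B_{d}(0)$, argues in two overlapping cases: if $|v|\ge r/4$ the rescaled estimate directly gives $p_{2}(B_{r}(v))\supset B^{\SS_{2}}_{(r/4)\cdot r/\sqrt2}(p_{2}(v))$; if $|v|\le r/4$ then $B_{r}(v)\supset B_{3r/4}(0)$ and the origin identity gives $p_{2}(B_{r}(v))\supset\{M\in\SS_{2}:|M|<9r^{2}/16\}$, which contains $B^{\SS_{2}}_{cr^{2}}(p_{2}(v))$ once $cr^{2}+|v|^{2}\le 9r^{2}/16$, e.g.\ for $c\le\tfrac12$. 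Hence $c=1/(4\sqrt2)$ works for every admissible $v$, which is the assertion.

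In short there is no serious obstacle: the single step that takes the place of the usual machinery (local inversion / contraction mapping / invariance of domain) is the elementary comparison $\tfrac1{\sqrt2}|w-u||w+u|\le|p_{2}(w)-p_{2}(u)|\le|w-u||w+u|$ read off from $p_{2}(w)-p_{2}(u)=(w+u)\odot(w-u)$; once that is in hand, homogeneity makes the passage to arbitrary $v$ and the case split routine. The two points that genuinely need attention are the embedded-submanifold claim for $\SS_{2}\setminus\{0\}$ (for which the $GL_{n}$-orbit description, together with the local-closedness check, is the cleanest route) and the fact that, strictly, $p_{2}$ is a two-to-one branched cover branched over $0$, so the homeomorphism statement is to be understood as referring to the induced map on $V/\{\pm 1\}$.
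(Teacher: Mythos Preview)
Your argument is correct and takes a genuinely different route from the paper's. The paper proves the quantitative inclusion by fixing $v_0\neq 0$, computing $dp_{2,v_0}(w)=2v_0\odot w$, observing this is a linear isomorphism, and then invoking the nonstandard machinery of the surrounding sections: for $0<\d\sim 0$ one has $p_2(v_0+\d w)=p_2(v_0)+\d\,dp_{2,v_0}(w)+o(\d)$, whence an argument parallel to Lemma~\ref{lem: onto result for pf of IFT} gives $p_2(B_\d(v_0))\supset B_{\d s/2}(p_2(v_0))$ for infinitesimal $\d$, and overflow pushes this out to a standard radius. Your proof bypasses all of this by exploiting the polarization identity $p_2(w)-p_2(u)=(w+u)\odot(w-u)$ together with the parallelogram law; this yields the inclusion $p_2(B_\rho(u))\supset B^{\SS_2}_{\rho/\sqrt2}(p_2(u))$ for $|u|=1$ directly, and homogeneity plus a clean two-case split handles general $v$.

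What your approach buys: it is entirely standard and elementary, it produces an explicit uniform constant $c=1/(4\sqrt2)$ valid for every $r<d$, and it sidesteps a uniformity issue in the paper's sketch (the overflow radius $\bar r$ there is obtained for a fixed $v_0$, and the passage to a single $r$ working for all admissible $v$ is not spelled out). What the paper's approach buys: it keeps the lemma inside the magnification-space/overflow paradigm that the paper is advertising, so that this lemma serves as a warm-up instance of the same mechanism used in the main theorems. You are also right to flag two points the paper leaves implicit: the target balls must be read inside $\SS_2$, and $p_2$ is two-to-one (since $p_2(v)=p_2(-v)$), so the ``homeomorphism onto $\SS_2$'' claim is literally true only after passing to $V/\{\pm 1\}$; this does not affect the downstream use of the lemma, but it is a genuine slip in the statement.
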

\begin{proof}
     We need to verify the last statement. If $v=0$, the statement is clear; assume $v\not=0$. Given this, note that the map $v+tw\mapsto (v_0+tw)^{\odot 2}$ maps an $r$ ball on centered on $v_0$ into $B_{r^2}(p_2(v))\cap\SS_2$ and the differential of this map is just $dp_{2,v_0}:T_{v_0}V\ra T_{p_2(v_0)}\SS_2$ which satisfies $dp_{2,v_0}(w)=\f{d}{dt}(v_0+tw)^{\odot 2}|_{t=0}=2v_0\odot w$, ie., is an isomorphism as $v_0\not=0$, ie.,   and in fact if $|v|=r\leq 1$, $|dp_{2,v}(w)|=2r|w|$, so that the image of the ball $B_s(0)\subset T_{v_0}V$ under $dp_{2,v_0}$ contains the ball $B_{rs}(0)\subset T_{p_2(v_0)}V^{\odot 2}$.
     But, *transferring this and noting that for $0<\d\sim 0$, we have that $p_2(v_0+\d w)=p_2(v_0)+\d dp_{2,v_0}(w)+o(\d)$ so that $dp_{2,v_0}(B_\d(0))\supset B_{r\d}(0)$ along  with an argument similar to that in Lemma \ref{lem: onto result for pf of IFT}, gets that $p_2(B_\d(v_0))\supset B_{\d s/2}(p^2(v_0))$. Overflow with respect to the parameter $\d$ then gets this inclusion out to some standard value $\ov{r}$ and choosing $r=\min\{\ov{r},s\}$ gets our conclusion.
\end{proof}

     In the considerations for our singular inverse mapping theorem we will considering  the relationship of the quadratic differential $d^2f_0$ acting on $V^{\odot 2}$ which we will denote by $\wt{d^2f_0}$, ie. as $d^2f_0$ is symmetric bilinear,  we are defining, as is typically done, eg., see [Greub], the linear map $\wt{d^2f_0}:V^{\odot 2}\ra V$ by $\wt{d^2f_0}(v\odot w)=d^2f_0(v,w)$. (Note here that as we are in a vector space setting, we are identifying tangent vectors at some point in $V$ with vectors in $V$, and so identifying $T_vV^{\odot 2}$ with $V^{\odot 2}$.)
     Let $\SK_2(f)<V^{\odot 2}$ denote the kernel of the linear map $\wt{d^2f_0}$. At this point we need to recall the notion of transversality. For an excellent treatment, see \cite{GolubitskyGuillemin1973}, our definition will be taylored to this context. Suppose that $M,P$ are finite dimensional smooth manifolds without boundary, $g:M\ra P$ is a smooth map and $j:N\hookrightarrow P$ is the embedding map for a smooth submanifold $N$ of $P$.
\begin{definition}
      We say that $g$ is transversal to $N$ if $g(M)\cap N$ is empty for $dim(M)<codim(N)$, or if for each $y=f(x)\in g(M)\cap N$, we have that
       \begin{align}
          dim(dg_x(T_xM)/dg_x(T_xM)\cap T_yN)=dim(P)-dim(N).
       \end{align}
    This is written $g\ov{\pitchfork} N$. Intuitively,  $dg(TM)$ must fill out the `normal space' to $N$ at intersection points.
\end{definition}
    Note that, again see \cite{GolubitskyGuillemin1973}, the fundamental theorem on transversality is the Thom Transversality Theorem (of which there are several versions). For us, this result says that, in say the $C^k$ topology, $k\in\bbn$ to be determined, on smooth maps $g:M\ra P$, the set of $g$ that satisfy $g\ov{\pitchfork}N$ are open and dense.  Our situation has a twist in the sense that the map corresponding to $g$ will be the canonical embedding of $\SS$ and the submanifold $N$ (that is tacitly considered fixed) will be the kernel subspace $\SK_2(f)$ of $\wt{d^2f_0}$.
    That is, if $i_\SS:\SS\hookrightarrow V^{\odot 2}$ is the inclusion map. Then, in the case that $df_x=0$, we will consider the (generic) situation where $i_\SS\;\ov{\pitchfork}\;\SK_2(f)$. At this point, we need the following lemma.
\begin{lemma}\label{lem: f transversal implies H quad homeo}
    Suppose that $f:(V,0)\ra (V,0)$ is differentiable to order 2 at $0$ with $df_0=0$, and suppose that $i_\SS\;\ov{\pitchfork}\;\SK_2(f)$ (at $0$). Then the map $v\mapsto H(v)\;\dot=\;d^2f_0(v,v)=\wt{d^2f_0}\circ p^2(v)$ is a quadratic homeomorphism at $0$. That is, there are positive $b,c\in\bbr$ such that we have that $H|B_b(0)$ is a homeomorphism onto a neighborhood of $0$ and for all $0<\d\sim 0$ we have $H(\rz B_\d(0))\supset \rz B_{c\d^2}(0)$.
    In particular, for any $\d\in\mu(0)_+$, the internal map $\SH:\rz V\ra\rz V$ given by $\xi\mapsto\d\rz\!H(\xi)$ is $(c,d)$ quadratically regular for $\d$.
\end{lemma}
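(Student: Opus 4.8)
The plan is to exploit the factorization $H=\wt{d^2f_0}\circ p_2$. Since $p_2$ is already a quadratic homeomorphism by Lemma~\ref{lem:p^2 is quad homeo}, it suffices to show that the transversality hypothesis makes the linear map $\wt{d^2f_0}$ restrict to a bi-Lipschitz homeomorphism of the Segre variety $\SS_2$ near $0$, and then to compose the two maps. (Connectedness of $V\setminus\{0\}$ is used, so I take $n\ge 2$.)

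First, and this is the only geometrically substantive point, I would read off from $i_\SS\;\ov{\pitchfork}\;\SK_2(f)$ that for every $v\in V\setminus\{0\}$ the endomorphism $d^2f_0(v,\cdot)\colon V\to V$ is invertible. By the standard description of the tangent spaces of $\SS_2$ (Greub, \cite{GreubMultilinear1978}), at $p=v\odot v$ with $v\ne 0$ we have $T_p\SS_2=v\odot V=\{v\odot w:w\in V\}$, an $n$-dimensional subspace on which $\wt{d^2f_0}$ acts by $w\mapsto d^2f_0(v,w)$. Note that $\wt{d^2f_0}$ must be surjective, else $H$ lands in a proper subspace and the conclusion is impossible; granting this, the transversality $i_\SS\;\ov{\pitchfork}\;\SK_2(f)$ is equivalent, at $p=v\odot v$ with $v\ne0$, to $T_p\SS_2\oplus\SK_2(f)=V^{\odot 2}$, i.e.\ to $w\mapsto d^2f_0(v,w)$ being a bijection of $v\odot V$ onto $V$, hence to invertibility of $d^2f_0(v,\cdot)$. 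Two consequences are then formal: a nonzero $q\in\SS_2\cap\SK_2(f)$ would put the tangent direction of the ray through it into $T_q\SS_2\cap\SK_2(f)=\{0\}$, so $\SS_2\cap\SK_2(f)=\{0\}$ and hence $H(v)=d^2f_0(v,v)\ne 0$ for $v\ne 0$; and the polarization identity $d^2f_0(v,v)-d^2f_0(w,w)=d^2f_0(v-w,v+w)$ combined with the invertibility of $d^2f_0(v-w,\cdot)$ forces $v=w$ or $w=-v$ whenever $d^2f_0(v,v)=d^2f_0(w,w)$, i.e.\ $\wt{d^2f_0}$ is injective on $\SS_2$.

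Next, compactness of the unit sphere $S\subset V$ bounds the continuous positive functions $v\mapsto|d^2f_0(v,v)|$ and $v\mapsto\|d^2f_0(v,\cdot)^{-1}\|$, giving $c_0|v|^2\le|H(v)|\le M|v|^2$ for all $v$ and $\|(dH_v)^{-1}\|\le C/|v|$ for $v\ne 0$ (since $dH_v=2\,d^2f_0(v,\cdot)$). Thus $H$ is a local diffeomorphism on $V\setminus\{0\}$; the two-sided bound makes it proper there, hence a covering of the connected set $V\setminus\{0\}$, hence onto, so the radial map $v\mapsto H(v)/|H(v)|$ takes $S$ onto $S$ and therefore $H(B_r(0))=r^2H(B_1(0))\supseteq B_{c_0 r^2}(0)$. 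Feeding this together with Lemma~\ref{lem:p^2 is quad homeo} (which supplies $b,c'$ with $p_2$ a homeomorphism of $B_b(0)$ onto its image in $\SS_2$ and $p_2(B_r(v))\supseteq B_{c'r^2}(p_2(v))$), the composite $H$ is injective on $B_b(0)$ (by the previous paragraph), open there with $0$ in the interior of the image, and has continuous inverse (from $|H(v)|\ge c_0|v|^2$); this is the asserted homeomorphism of $B_b(0)$ onto a neighborhood of $0$, and $H(B_r(0))\supseteq B_{c_0 r^2}(0)$ for $r<b$. For the full $(c,d)$ quadratic regularity of $\SH=\d\rz\!H$ at $\xi$ with $|\xi|<a$ one needs $\rz H(\rz B_r(\xi))\supseteq\rz B_{c r^2}(\rz H(\xi))$: when $r\ll|\xi|$ this follows by rescaling $\xi$ onto $S$ and running the inverse-function-with-estimates argument of Lemma~\ref{lem: onto result for pf of IFT} at $\xi/|\xi|$ (the linear part opens by a factor $\asymp|\xi|$, which dominates the exact quadratic remainder), while when $|\xi|\lesssim r$ one uses $H(B_r(\xi))\supseteq H(B_{r/2}(0))\supseteq B_{c_0 r^2/4}(0)$ against $|H(\xi)|\le M|\xi|^2$, after shrinking $a,b$ so that the two regimes overlap. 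Each such inclusion is a first-order statement about the polynomial map $H$, hence transfers verbatim and holds for all $\d\in\mu(0)_+$, which is the asserted quadratic regularity.

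The main obstacle is the first step: the transversality is imposed at the singular point $0$ of $\SS_2$, where the bare tangent-space formulation degenerates, so care is needed to see that the condition genuinely delivers invertibility of $d^2f_0(v,\cdot)$ for every nonzero $v$ — equivalently, that $\wt{d^2f_0}$ restricted to $\SS_2$ is a submersion off $0$. Once that is in hand the rest is compactness, homogeneity and bookkeeping, the one mildly delicate item being the uniformity of the openness estimate across the transition scale $|\xi|\asymp r$.
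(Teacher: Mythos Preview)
Your approach is sound and takes a genuinely different route from the paper. The paper works locally at $0$: it reads transversality as $T_0\SS_2\cap\SK_2(f)=\{0\}$ (taking the smoothness of $\SS_2$ at $0$ from Lemma~\ref{lem:p^2 is quad homeo}), then uses continuity of the Gauss map $u\mapsto T_u\SS_2$ into the Grassmannian of $n$-planes in $V^{\odot 2}$---together with the observation that $\ker d\SA_u$ is a parallel translate of $\SK_2(f)$ since $\SA=\wt{d^2f_0}$ is linear---to propagate this to all $u$ in a $\SS_2$-neighborhood of $0$; one application of the classical inverse function theorem then makes $\wt{d^2f_0}|\SS_2$ a local diffeomorphism near $0$, and $H$ is the composition of that diffeomorphism with the quadratic homeomorphism $p_2$. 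You instead interpret the hypothesis at every smooth point $p=v\odot v$, $v\ne 0$, as invertibility of $d^2f_0(v,\cdot)$, and from there obtain global injectivity of $\wt{d^2f_0}|\SS_2$ via polarization and global surjectivity of $H$ via compactness, homogeneity, and a covering-space argument. Your route avoids the delicate tangent space at the cone point $0$, and you carry out the uniform openness estimate $H(B_r(\xi))\supseteq B_{cr^2}(H(\xi))$ at general $\xi$ explicitly in two regimes, whereas the paper simply asserts it after composing with $p_2$. What the paper's route buys is brevity and, more to the point, it \emph{derives} the condition you take as given: the Grassmannian continuity step is exactly what shows that transversality at the single point $0$ forces $T_{v\odot v}\SS_2\cap\SK_2(f)=\{0\}$ for nearby $v\odot v$, hence (by homogeneity of $\SS_2$) for every $v\ne 0$---so the paper's argument resolves the interpretive obstacle you flag in your final paragraph.
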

\begin{proof}
    First of all, we will see that $i_\SS\;\ov{\pitchfork}\;\SK_2(f)$ will imply that there is $0<\ov{c}\in\bbr$ such that if $\SB_{\ov{c}}\subset V^{\odot 2}$ denotes the metric ball in $V^{\odot 2}$ of radius $\ov{c}$, then $\wt{d^2f_0}|\SB_{\ov{c}}\cap\SS$ is a diffeomorphism from $\SB_{\ov{c}}\cap\SK_2(f)$ to a neighborhood of $0$ in $V$. The inverse mapping theorem will imply this last assertion once one verifies that for $u\in\SS$ in some neighborhood of $0$ in $V^{\odot 2}$ we have that $d(\wt{d^2f}_0|\SS)_u$ is an isomorphism from $T_u\SS$ to $T_wV$ where $w=\wt{d^2f}(u)$. To simplify notation, let $\SA\dot=\wt{d^2f}_0$, the linear map from $V^{\cdot 2}$ to $V$.
    Given this, first note that as $\SA\circ i_\SS$ is smooth, then $\SA|\SS_2$ is a smooth map. But transversality along with a dimension count  implies that $Im d(i_\SS)_0\cap ker d\SA_0=\{0\}$. At this point note that as we are working in the vector space $V$ and as  $\SA$ is linear, if $v\in V^{\odot 2}$, then writing  $\SE_v\dot=ker d\SA_v$, we have $\SE_v$ is just a parallel translate (by $v$) of the subspace $\SE_0$. That is, translation by $v\in V^{\odot 2}$, $\Ft_v:T_0V^{\odot 2}\ra T_vV^{\odot 2}$ is a (canonical) linear isomorphism such $v\mapsto \Ft_v$ is  continuous (even smooth, but this is not needed) and $\Ft_{-v}(\SE_v)=\SE_0$. But its a special case of a well known fact about differentiable submanifolds of a finite dimensional vector spaces that the map $\Fg:v\in \SS\mapsto \Ft_{-v}(T_v\SS)< T_0 V^{\odot 2}$ is a continuous map to the Grassmann manifold of $k=dim \SS$ dimensional subspaces of $T_0V^{\odot 2}$. Given that the set of $k$ dimensional subspaces $W$ of $T_0V^{\odot 2}$ satisfying $W\cap\SE_0=\{0\}$ is an open set $U$  (in this Grassmannian), the continuity of $\Fg$ implies that $\wt{U}\dot=\Fg^{-1}(U\cap Im\Fg)$ is open in $\SS$.
    But this just says that $ker d\SA_v|T\SS_v=\{0\}$ for all $v\in\wt{U}$.
    So that we have that $H$ is the composition of a diffeomorphism and, by Lemma \ref{lem:p^2 is quad homeo}, a quadratic homeomorphism which is clearly a quadratic homeomorphism. (Of course, we will need to choose a new $c>0$ to compensate for the linear distortions of the diffeomorphism.)
\end{proof}
    The previous work immediately implies the following genericity result for those `regular' maps among those with vanishing derivative. One might compare this result with that of Phein, \cite{PhienLipIFT2012}, noting that our inverse mappings are generally not Lipschitz and also noting the relationship of transversality with stability of regularity under perturbation.
\begin{corollary}
    Suppose that $f:(\bbr^n,x)\ra\bbr^n$ is differentiable to order 2 at $x$ and that $df_x=0$. Suppose that $i_{\SS^2}\;\ov{\pitchfork}\;ker(\wt{d^2f_x})$. Then $f$ is a homeomorphism from a neighborhood of $x$ onto a neighborhood of $f(x)$.
\end{corollary}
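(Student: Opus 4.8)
The plan is to deduce this corollary directly from Theorem~\ref{thm: quad reg -> homeomorph} by verifying that the transversality hypothesis $i_{\SS^2}\;\ov{\pitchfork}\;\ker(\wt{d^2f_x})$ supplies exactly the $(c,d)$ quadratic regularity of the map $\xi\mapsto\d\,d^2f_x(\xi,\xi)$ that the theorem requires. The bridge between the two is Lemma~\ref{lem: f transversal implies H quad homeo}, which is stated in almost precisely the form needed: it asserts that under the transversality assumption the map $H(v)=d^2f_x(v,v)=\wt{d^2f_x}\circ p_2(v)$ is a quadratic homeomorphism at $0$, and that consequently for every $\d\in\mu(0)_+$ the internal dilation $\xi\mapsto\d\,\rz H(\xi)$ is $(c,d)$ quadratically regular for $\d$.

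First I would observe that, after the harmless translation sending $x$ to $0$ in the domain and $f(x)$ to $0$ in the range (which changes neither differentiability to order $2$ at the point, nor the vanishing of the first differential, nor the transversality condition, since all of these are local at the point and $d^2f_x=d^2f_0$ under this translation), we are exactly in the setting of Lemma~\ref{lem: f transversal implies H quad homeo} with $V=\bbr^n$. Applying that lemma produces positive constants $b,c\in\bbr$ and, for each $\d\in\mu(0)_+$, the fact that $\SH\colon\xi\mapsto\d\,\rz H(\xi)$ is $(c,d)$ quadratically regular for $\d$. Translating the nonstandard description of the magnification space $Q^\d_x$ back, $H$ restricted to $Q^\d_x$ is precisely the map $\xi\mapsto d^2f_x(\xi,\xi)$ appearing in the hypothesis of Theorem~\ref{thm: quad reg -> homeomorph}, so $\xi\mapsto\d\,d^2f_x(\xi,\xi)$ is $(c,d)$ quadratically regular for $\d$ for all $\d\in\mu(0)_+$. (One should take $d=\min\{b,\ov a\}$ or similar to line up the ball on which regularity is asserted with the ball used in the theorem; this is a routine matching of constants.)

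Second, with this verified, the hypotheses of Theorem~\ref{thm: quad reg -> homeomorph} are met: $f$ is differentiable to order $2$ at $x$, $df_x=0$, and the required quadratic regularity of $\xi\mapsto\d\,d^2f_x(\xi,\xi)$ holds for all positive infinitesimal $\d$. The theorem then yields directly that $f$ is a homeomorphism from a neighborhood of $x$ onto a neighborhood of $f(x)$, which is the assertion of the corollary.

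I do not expect a genuine obstacle here, since the corollary is essentially the composition of Lemma~\ref{lem: f transversal implies H quad homeo} and Theorem~\ref{thm: quad reg -> homeomorph}. The one point that needs care — and the closest thing to a ``hard step'' — is bookkeeping the various radius and constant parameters ($a$, $\ov d$, $c$, $b$, $d$) so that the ball on which Lemma~\ref{lem: f transversal implies H quad homeo} delivers quadratic regularity is at least as large as the ball on which Theorem~\ref{thm: quad reg -> homeomorph} consumes it; since the lemma gives the inclusion $H(\rz B_\d(0))\supset\rz B_{c\d^2}(0)$ for all positive infinitesimal $\d$ (and by overflow a positive standard radius), shrinking $d$ if necessary suffices, and no new analytic input is needed.
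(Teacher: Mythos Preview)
Your proposal is correct and follows essentially the same route as the paper: the paper's proof simply cites Lemma~\ref{lem: f transversal implies H quad homeo} and Theorem~\ref{thm: quad reg -> homeomorph} and declares the corollary an immediate consequence. Your added remarks about translating $x$ to $0$ and reconciling the radius/constant parameters are reasonable bookkeeping but not required by the paper, which leaves those routine adjustments implicit.
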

\begin{proof}
    This is a consequence of the previous result, lemma \ref{lem: f transversal implies H quad homeo} and theorem \ref{thm: quad reg -> homeomorph}.
\end{proof}

\section{Perspective: Higher order magnification spaces}
   First of all, the reader should see that at this point the analogous higher order results will hold via a similar argument as the second order. Some preliminary calculation are given below, but, more importantly from the author's perspective, a general framework can be developed for mappings vanishing whose `lower order terms' vanish along subspaces (or more general algebraic subsets); this will be developed in later papers. Furthermore, it seems that by restricting these functions to, eg. a locally embedded copy of the Levi-Civita field (see eg., Todorov and Wolf, \cite{TodorovWolfHahnFieldRep}),  we may have much more control over infinitesimals tests for local invertibility. On a historical note: it's not inconceivable that Levi-Civita had something like this in mind, see Laugwitz, \cite{LaugwitzCantorVeronese2002}, eg., p119.

     Let's just give an idea of how to iterate the magnification space framework.
     Just as we introduced the magnification $\rs$ modules ${}Q^\d_x$
     of a $\d$ neighborhood of $x\in\bbr^m$, we can similarly magnify the almost
     $\rs$ submodules of ${}Q^\d_x$. For example, if $\xi\in {}Q^\d_{x}$, then we have $D_\e(\xi)\subset Q^\d_\xi$ and so can define ${}Q^\e_\xi({}Q^\d_x)$ as the set $\{\f{\z-\xi}{\e}:\z\in D_\e(\xi)\}$. Note that this set is canonically isomorphic to $\rz\bbr^m_{nes}$ and that for $\xi\in{}^\s Q^\d_x$, ie., a standard point in $Q^\d_x$, $Q_\xi^\e(Q^\d_x)$ has well defined (second order?) standard points.
       For the second order expansion, we get
     \begin{align}
        f(x+\d\xi+\d^2\rho)=f(x)+\d f^\d_x(\xi)+\d^2 (f^\d_x)^\d_\xi(\rho)
     \end{align}
   and similarly we get a third order expansion
     \begin{align}
        f(x+\d\xi+\d^2\rho+\d^3\la)&=\\
        f(x)&+\d f^\d_x(\xi)+\d^2(f^\d_x)^\d_\xi(\rho)+\d^3((f^\d_x)^\d_\xi)^\d_\rho(\la).\notag
     \end{align}

    It seems clear at this point (although the author hasn't written down all of the details) that higher order versions of the singular inverse mapping theorem of the previous section follow from  arguments analogous to the proof of the quadratic theorem.
    For now, we will just write down the next higher order version of the formula \ref{eqn: f^d_x difference formula in d^2f}. For $\xi,\xi'\in Q^\d_0(Q^\d_0)$, we have
     \begin{align*}
       (f^\d_x)^\d_0(\xi)-(f^\d_x)^\d_0(\xi')=\hspace{2in}\\
       \d^2\big(d^3f_x(\xi,\xi,\xi)-d^3f_x(\xi',\xi',\xi')\big)+\d^2o\big(d^3f_x(\xi,\xi,\xi)-d^3f_x(\xi',\xi',\xi')\big)
     \end{align*}
     This has the same general form as formula \ref{eqn: f^d_x difference formula in d^2f}, and possibly the reader can see that a similar argument will work. This material will appear in later work.

     Let's end with some general remarks. We believe that it's important to point out that this mode of orders of magnitude expansion is much more versatile than a Taylor expansion. For example, unlike the Taylor expansion, this expansion allows us to, at each successive expansion order, shift infinitesimally the center of expansion. Furthermore, this expansion is independent of choice of coordinates and actually exists on the  nested sequence of infinitesimal balls. As we will see in a later paper, this sequence exists and is well defined on sufficiently differentiable manifolds, $M$, with metric (and independent of the choice of such). Note, in this, that we will see that almost linear structures exist on the $Q^\d_x\subset\rz M$.
     Further, it's conceivable that such expansion with more general filtered orders of magnitude than integral powers of an infinitesimal could be useful.

\bibliographystyle{amsplain}
\bibliography{nsabooks}

\providecommand{\bysame}{\leavevmode\hbox to3em{\hrulefill}\thinspace}
\providecommand{\MR}{\relax\ifhmode\unskip\space\fi MR }
\providecommand{\MRhref}[2]{%
  \href{http://www.ams.org/mathscinet-getitem?mr=#1}{#2}
}
\providecommand{\href}[2]{#2}
\begin{thebibliography}{10}

\bibitem{Behrens1974}
M.~Behrens, \emph{{A local inverse function theorem}}, {Victoria Symposium in
  Nonstandard Analysis} (A.~Hurd and P.~Loeb, eds.), Contemporary Mathematics,
  vol. 302, Springer Verlag, 1974, pp.~34--36.

\bibitem{BerzAutomaticDifferentiation}
Martin Berz, \emph{{Automatic differentiation as nonarchimedean analysis.}},
  {Proceedings of the 3rd International IMACS-GAMM Symposium on Computer
  Arithmetic and Scientific Computing (SCAN-91)}, {Amsterdam: North-Holland},
  1992.

\bibitem{Clarke1976}
F.~H. Clarke, \emph{{On the inverse function theorem}}, Pacific J. Math.
  \textbf{64} (1976), 97--102.

\bibitem{CutlandNSrealAnaly1997}
Nigel~J. Cutland, \emph{Nonstandard real analysis}, {Nonstandard Analysis
  ({E}dinburgh, 1996)}, NATO Adv. Sci. Inst. Ser. C Math. Phys. Sci., vol. 493,
  Kluwer Acad. Publ., Dordrecht, 1997, pp.~51--76. \MR{1603229 (99b:26043)}

\bibitem{CutlandIFT1993}
Nigel~J. Cutland and Han~Qiao Feng, \emph{An infinitesimal proof of the
  implicit function theorem}, Glasgow Math. J. \textbf{35} (1993), no.~2,
  163--166. \MR{1220558 (94d:26021)}

\bibitem{FrankowskaInvMapThmss1990}
H{\'e}l{\`e}ne Frankowska, \emph{Some inverse mapping theorems}, Ann. Inst. H.
  Poincar\'e Anal. Non Lin\'eaire \textbf{7} (1990), no.~3, 183--234.
  \MR{1065873 (91j:49020)}

\bibitem{FukuiTameIFT2010}
Toshizumi Fukui, Krzysztof Kurdyka, and Laurentiu Paunescu, \emph{Tame
  nonsmooth inverse mapping theorems}, SIAM J. on Optimization \textbf{20}
  (2010), no.~3, 1573--1590.

\bibitem{GolubitskyGuillemin1973}
Martin Golubitsky and Victor Guillemin, \emph{{Stable Mappings and Their
  Singularities}}, Springer-Verlag, 1973.

\bibitem{GreubMultilinear1978}
W.H. Greub, \emph{{Multilinear Algebra}}, Hoch Schultexte/Universitexts,
  Springer-Verlag, 1978.

\bibitem{Henson1997}
C.~Ward Henson, \emph{{Foundations of nonstandard analysis: A gentle
  introduction to nonstandard extensions}}, {Nonstandard Analysis and its
  Applications} (L.~Arkeryd, N.~Cutland, and C.~W. Henson, eds.), Kluwer
  Academic, 1997, pp.~1--49.

\bibitem{UnbddGenerzJacobians-Jeyakumar-Luc2002}
V.~Jeyakumar and D.~T. Luc, \emph{An open mapping theorem using unbounded
  generalized {J}acobians}, Nonlinear Anal. \textbf{50} (2002), no.~5, Ser. A:
  Theory Methods, 647--663. \MR{1910909 (2003d:49027)}

\bibitem{KnightStrngIFT1988}
William~J. Knight, \emph{{A strong inverse function theorem}}, Am. Math.
  Monthly \textbf{95} (1988), 648--651.

\bibitem{KrantzParks1992}
Steven~G. Krantz and Harold~R. Parks, \emph{{The Implicit Function Theorem}},
  Birkh\"auser Boston Inc., Boston, MA, 2002, History, theory, and
  applications. \MR{1894435 (2003f:26001)}

\bibitem{LaugwitzCantorVeronese2002}
Detlef Laugwitz, \emph{Debates about infinity in mathematics around 1890: the
  {C}antor-{V}eronese controversy, its origins and its outcome}, NTM (N.S.)
  \textbf{10} (2002), no.~2, 102--126. \MR{1907246 (2003b:01027)}

\bibitem{Lindstrom1988}
Tom Lindstr{\o}m, \emph{{An invitation to nonstandard analysis}}, {Nonstandard
  Analysis and its Applications} (N.~Cutland, ed.), Cambridge University Press,
  1988, pp.~1--105.

\bibitem{McGaffeyPhD}
T.~McGaffey, \emph{{Regularity and nearness theorems for families of local Lie
  groups}}, Ph.D. thesis, {Rice University}, 2011.

\bibitem{NijenhuisInverseMapping1974}
Albert Nijenhuis, \emph{Strong derivatives and inverse mappings}, Amer. Math.
  Monthly \textbf{81} (1974), 969--980. \MR{0360958 (50 \#13405)}

\bibitem{PhienLipIFT2012}
P.~{Phien}, \emph{{Some quantitative results on Lipschitz inverse and implicit
  functions theorems}}, ArXiv e-prints (2012).

\bibitem{ShamBerz2010}
K.~Shamseddine and M.~Berz, \emph{{Analysis on the Levi-Civita field, a brief
  overview}}, {Advances in p-Adic and Non-Archimedean Analysis} (M.~Berz and
  K.~Shamseddine, eds.), Contemporary Mathematics, vol. 508, American
  Mathematical Society, 2010, pp.~215--237.

\bibitem{Stroyan1977}
K.~Stroyan, \emph{{Infinitesimal analysis of curves and surfaces}}, {Handbook
  of Mathematical Logic} (K.~J. Barwise, ed.), North Holland, 1977,
  pp.~197--231.

\bibitem{StrLux76}
K.D. Stroyan and W.A.J. Luxemburg, \emph{{Introduction to the Theory of
  Infinitesimals}}, Academic Press, 1976.

\bibitem{TodorovWolfHahnFieldRep}
Todor Todorov and Robert Wolf, \emph{{Hahn field representation of {A}.
  Robinson's asymptotic numbers}}, January~30 2006, Comment: 18 pages,
  pp.~357--374.

\end{thebibliography}

\end{document}